\newcommand{\prt}{\partial}
\newcommand{\E}{\mathbf{E}}
\renewcommand{\P}{\mathbf{P}}
\renewcommand {\epsilon}{\varepsilon}
\theoremstyle{plain}
\newtheorem{thm}{Theorem}[section]
\newtheorem{lem}[thm]{Lemma}
\newtheorem{prp}[thm]{Proposition}
\theoremstyle{definition}
\newtheorem{rem}[thm]{Remark}
\DeclareMathSymbol{\ophi}{\mathalpha}{letters}{"1E}
\renewcommand{\phi}{\varphi}
\newcommand{\be}{\begin{equation}}
\newcommand{\ee}{\end{equation}}
\newcommand{\ben}{\begin{equation*}}
\newcommand{\een}{\end{equation*}}
\newcommand{\ba}{\begin{equation}\begin{aligned}}
\newcommand{\ea}{\end{aligned}\end{equation}}
\newcommand{\ban}{\begin{equation*}\begin{aligned}}
\newcommand{\ean}{\end{aligned}\end{equation*}}
\DeclareMathOperator{\sgn}{sgn}
\newcommand{\ex}{\mathrm{e}}
\newcommand{\di}{\mathrm{d}}
\newcommand{\rF}{\mathscr{F}}
\newcommand{\bI}{\mathbb{I}}
\newcommand{\bR}{\mathbb{R}}
\let\oldmarginpar\marginpar
\renewcommand{\marginpar}[1]{\oldmarginpar{\scriptsize\texttt{\color{red}{#1}}}}
\begin{document}
\title{First order convergence of weak Wong--Zakai approximations of L\'evy driven Marcus SDEs}



\author{Tetyana Kosenkova\footnote{Institute of Mathematics,
University of Potsdam, Karl--Liebknecht--Strasse 24--25, 14476 Potsdam, Germany;
kosenkova@math.uni-potsdam.de},
Alexei Kulik,\footnote{Wroclaw University of Science and Technology; kulik.alex.m@gmail.com} \footnote{A.Kulik was supported in part by the National Science Centre, Poland, grant no.
2019/33/B/ST1/02923 and by Alexander
von Humboldt Foundation within the Research Group Linkage Programme between the
Institute of Mathematics at the University of Potsdam and the Institute of Mathematics
of National Academy of Sciences of Ukraine}\ \ and
Ilya Pavlyukevich\footnote{Institute of Mathematics, Friedrich Schiller University Jena, Ernst--Abbe--Platz 2,
07743 Jena, Germany; ilya.pavlyukevich@uni-jena.de}}

\maketitle

\begin{abstract}
For solutions $X=(X_t)_{t\in[0,T]}$ of L\'evy-driven Marcus stochastic differential equations we study
the Wong--Zakai type time discrete approximations $\bar X=(\bar X_{kh})_{0\leq k\leq T/h}$, $h>0$, and establish the first order
convergence $|\E f(X_T)-\E f(X^h_T)|\leq C h$ for $f\in C_b^4$.

\end{abstract}

\textbf{Keywords:} L\'evy process; Marcus stochastic differential equation; Wong--Zakai approximation; first order convergence; Euler scheme

\numberwithin{equation}{section}

\tableofcontents

\section{Introduction}

SDEe driven by L\'evy processes belong nowadays to a standard toolbox of researches working in Physics, Finance, Engineering etc.
Under standard assumptions, solutions $X$ of SDEs are Markov (Feller) processes containing the continuous diffusive component as well as
(infinitely many) jumps which model instant change of the observable in the phase space.

From the point of view of applications, one often wants to determine the averaged quantities of the type $\E_x f(X_T)$ for a fixed
deterministic time $T>0$ and a regular test function $f$ (which is equivalent to solving a certain integro-differential Poisson equation). This
is actually the question of an effective approximation in the Monte-Carlo methods, which constitute a natural analogue for finite elements method for PIDE.

The approximation problem for the functionals $\E_x f(X_T)$ for diffusions is nowadays a classical topic (\cite{KloPla-95}).
The numerical methods have originated in the paper by \cite{maruyama1955continuous} who showed that
for the
It\^o SDE $\di X=a(X)\di t+ b(X)\,\di W$ driven by the Brownian motion
the Euler scheme $\bar X_{(k+1)h}=\bar X_{kh}+ a(\bar X_{kh})h +b(\bar X_{kh})(W_{(k+1)h}-W_{kh})$
with the step size $h>0$
converges to $X_T$ in $L^2$-sense
for each $T\geq 0$.
\cite{mil1979method,talay1984efficient} showed that the Euler scheme yields weak convergence of the order $\mathcal O(h)$.
Higher order methods can be found
in \cite{mackevicius1994second,talay1984efficient,mil1986weak,KloPla-95,talay1990expansion,bally1996law}, see also \cite{milstein1994numerical,MilsteinT2004}

Although the diffusion models are well established, the
presence of jumps typically requires an additional justification.

In various application areas, jumps appear quite naturally: finance (jumps of stock prices), population biology
(jump processes appear as limits of Markov chains). Some of these models are well described by It\^o SDEs of the type $\di X=F(X)\,\di L$.
The weak convergence of the Euler scheme for SDEs with a jump component of finite intensity was studied by
\cite{mikulevicius1988time,kubilius2002rate}.
\cite{ProTal-97} established the first order  convergence of the Euler scheme in particular in case of $C^4_b$ coefficients, $C^4_b$-function $f$, and tails of the L\'evy
measure having finite 8th moments (they also have results for increasing $f$; then more moments are needed).
Further analysis was performed by \cite{JacodKMP-05} (Remark 2.7, 12th moments needed).
\cite{liu2000weak} studied the SDE driven by a BM and a PRM (all moments of $X$ are needed).
Recently weak approximations for SDEs with H\"older-continuous coefficients were studied by
\cite{mikulevicius2011rate,mikulevicius2012rate,mikulevicius2015weak}.

There is however another, (mechanical) point of view on SDE, which originates in the suppositon that
both the Brownian motion and jump processes are convenient mathematical idealizations of smooth real-world processes
(i.e.\ mechanical motions). This paradigm goes back to the Langevin who obtained a random motion of a heavy particle in a liquide
as an integral of a correlated Gaussian velocity process.

It turnes out that the idealized diffusion dynamics in such an approach is correctly described by the
Stratonovich SDEs which can be seen as limit of random non-autonomous ODEs in which the
Brownian motion by replaced by its (piece-wise) smooth approximations (Wong--Zakai approximations).

In the presence of jumps,
the Marcus (canonical) SDEs are extensions of Stratonovich SDEs for diffusions. As Stratonovich equations, they have
lot of useful (natural) properties such as
the Newton--Leibniz chain rule. They are also limits of continuous random ODEs obtained by pathwise
approximations of the driving L\'evy process by smooth functions
(the Wong--Zakai technique). For applications in Physics see e.g.\
\cite{ChePav-14,PavLiXuChe-15}.

Roughly speaking, jumps in the Marcus setting should be understood as idealizations of very fast motions along certain trajectories
determined by the physical parameters of the system.

Despite of these usefulness, numerical methods for Marcus SDEs are not well-developed.
Some partial results on the physical level of rigour can be found in
\cite{LiMinWang-13,LiMinWang-14err}.

The goal of this paper is to fill this gap
and construct an Euler-Maruyama type numerical scheme $\bar X$ on a discrete time grid of the size $h>0$, and to
establish the first order weak approximations $|\E f(X_T)-\E f(\bar X_T)|\leq Ch$ for a certain class of test functions $f$.
The main difficulty will consist in the treatment of the Marcus jump term,  which involves the analysis of a certain family of
non-linear ODEs and makes the problem different to the It\^o case.

\section{Setting and the main result}

On a filtered probability space $(\Omega,\rF,\mathbb F,\P)$ satisfying the usual hypotheses consider an $m$-dimensional
Brownian motion $W$ and an independent $m$-dimensional pure jump L\'evy process $Z$
with a characteristic triplet $(0, 0, \nu)$,
\ba
Z(t)=\int_0^t \int_{\|z\|\leq 1} z\, \tilde N(\di s,\di z)+ \int_0^t \int_{\|z\|> 1} z\, N(\di s,\di z).
\ea
For $d\geq 1$ consider vector-valued function
\ba
a(x)=\begin{pmatrix}
      a^1(x),\\\vdots\\a^d(x)
     \end{pmatrix},
\ea
and matrix-valued functions
\ba
b(x)&=\begin{pmatrix}
     b^{1}_1(x)&\cdots &b^{1}_m(x)\\
     \vdots&\ddots &\vdots\\
     b^{d}_1(x)&\cdots&b^d_m(x)
    \end{pmatrix},\qquad
c(x)=\begin{pmatrix}
      c^{1}_1(x)&\cdots &c^{1}_m(x)\\
     \vdots&\ddots &\vdots\\
     c^{d}_1(x)&\cdots&c^d_m(x)
    \end{pmatrix},\\
c^i(x)&=\begin{pmatrix}c^i_1(x),\dots,c^i_m(x)\end{pmatrix} \quad \text{is the $i$-th row of the matrix $c(x)$, }i=1,\dots,d .
%
%
%
\ea

We consider a Marcus (canonical) SDE
\be
\label{e:SDEM}
X_t=X_0+\int_0^t a(X_s)\, \di s+\int_0^t b(X_s)\circ\di W_s+\int_0^t c(X_s)\diamond \di Z_s,\quad t\geq 0.
\ee
It can be  rewritten as an It\^o SDE driven by a Brownian motion and a Poissonian random measure:
in the coordinate form as
\ba
\label{e:SDEI}
X^i_t=x^i&+
\int_0^t a^i(X_{s-})\,\di s \\
&+   \sum_{j=1}^m \int_0^t b^i_j(X_{s})\,\di W^j_s
+  \frac12 \sum_{j=1}^m\sum_{l=1}^d\int_0^t \frac{\partial}{\partial x^l}b^i_j (X_{s})b^l_j(X_{s})  \,\di s\\
&+ \int_0^t\int_{\|z\|\leq 1} \Big(\ophi^z(X_{s-}) -X_{s-}  \Big)^{\!i} \, \tilde N(\di z,\di r) \\
&+ \int_0^t\int_{\|z\|\leq 1} \Big(\ophi^z(X_{s}) -X_{s} - c(X_{s})z \Big)^{\!i} \, \nu(\di z)\,\di r \\
&+ \int_0^t\int_{\|z\|>1} \Big(\ophi^z(X_{s-}) -X_{s-}  \Big)^{\!i} \, N(\di z,\di r), \\
\ea
where $\ophi^z(x)$ is a Marcus flow generated by the non-linear ordinary differential equation
\ba
&\begin{cases}
\label{e:ophi-eq}
&\displaystyle\frac{\di}{\di u} \ophi^z(u;x)=c(\ophi^z(u;x))z\\
&\displaystyle\ophi^z(0;x)=x,\quad u\in[0,1],\\
 \end{cases}\\
&\ophi^z(x):=\ophi^z(1;x).
\ea

For a complete account on Marcus SDEs see \cite{Marcus-78,Marcus-81,KurtzPP-95,Kunita-04,Applebaum-09}.
Note that the Marcus integral $\int_0^t c(X_s)\,\diamond\, \di Z_s$ cannot be represented a limit of Riemannian sums
(opposite to the Stratonovich integral), so that the SDE
\eqref{e:SDEM} should be understood via its It\^o representation \eqref{e:SDEI}.

For $f\colon \bR^d\mapsto\bR$ we will use the uniform norm
\ba
\|f\|=\sup_{x\in\bR^d}|f(x)|.
\ea
For $x\in\bR^d$ (and $\bR^m$), we will work with the Euclidian norm
\ba
\|x\|=\Big((x^1)^2+\dots+(x^d)^2\Big)^{1/2}.
\ea

For a function $f\colon \bR^d\to \bR$ denote $\partial^\alpha f$ its partial derivative corresponding to a multiindex $\alpha$.
Let $Dc(x)$ be the gradient tensor of the mapping $x\mapsto c(x)$.
For each $x\in\bR^d$, we consider it as a linear operator $Dc(x)\colon \bR^m\to \bR^{d\times d}$ given by
\ba
Dc(x)z=\begin{pmatrix}
         \displaystyle \frac{\partial}{\partial x^1} \langle c^1(x),z\rangle &\cdots &\displaystyle\frac{\partial}{\partial x^d} \langle c^1(x),z\rangle\\
     \vdots&\ddots &\vdots\\
   \displaystyle  \frac{\partial}{\partial x^1} \langle c^d(x),z\rangle&\cdots&\displaystyle\frac{\partial}{\partial x^d} \langle c^d(x),z\rangle
       \end{pmatrix}.
\ea
Then we define
\ba
\|Dc(x)\|= \sup_{\|z\|\leq 1} \|Dc(x)z\|,
\ea
and let
\ba
\|Dc\|= \sup_{x\in\bR^d} \|Dc(x)z\|
\ea
For practical needs it is sometimes convenient to use the
the maximum entry norm of the gradient tensor
\ba
\|Dc\|_\ex= \max_{\substack{1\leq i,k\leq d \\ 1\leq j\leq m }}   \Big\|\frac{\partial}{\partial{x_k}}c^i_j(x) \Big\|.
\ea
Then we have
\ba
\label{e:De}
\|Dc(x) z\| \leq \|Dc\|\cdot \|z\|\leq d\sqrt m \cdot \|Dc\|_\ex \cdot \|z\| .
\ea

In this paper we make the following assumptions on the coefficients $a$, $b$ and $c$.

\noindent
\textbf{H$_{a,b,c}$:}
\ba
\label{e:BDab}
&a\in C^4(\bR^d,\bR^d)\quad \text{and}\quad  &&\|\partial^\alpha a^i\|<\infty , \quad 1\leq i\leq d, \quad 1\leq |\alpha|\leq 4;\\
&b\in C^4(\bR^d,\bR^{d\times m})  \quad \text{and}\quad   &&\|\partial^\alpha b^i_j\|<\infty ,\quad  1\leq i\leq d, \ 1\leq j\leq m,
\quad 1\leq |\alpha|\leq 4,\\
&&&\| b^i_j\cdot \partial^\alpha b^k_l\|<\infty, \quad 1\leq i,k\leq d, \ 1\leq j,l\leq m,
\quad 2\leq |\alpha|\leq 4 ;\\
&c\in C^4(\bR^d,\bR^{d\times m})  \quad \text{and}\quad   &&\|\partial^\alpha c^i_j\|<\infty ,\quad  1\leq i\leq d, \ 1\leq j\leq m,
\quad 1\leq |\alpha|\leq 4,\\
&&&\| c^i_j\cdot \partial^\alpha c^k_l\|<\infty, \quad 1\leq i,k\leq d, \ 1\leq j,l\leq m,
\quad 2\leq |\alpha|\leq 4 .
\ea
Under these conditions there is a unique global solution $\ophi^z$ of \eqref{e:ophi-eq} whose properties are studied in Appendix \ref{a:ophi}.

We consider a numerical scheme for the equation \eqref{e:SDEM} based on the Wong--Zakai approximations of the driving processes $W$ and $Z$.
For the time step $h>0$, let us approximate $W$ and $Z$ by polygonal curves with knots at $\{kh,W_{kh}\}_{k\geq 0}$, $\{kh, Z_{kh}\}_{k\geq 0}$,
namely we define the continuous time processes
\ba
W^h_t&=W_{kh}+\frac{t-kh}{h}\Big(W_{(k+1)h}-W_{kh}\Big),\quad t\in[kh,(k+1)h),\quad k\geq 0,\\
Z^h_t&=Z_{kh}+\frac{t-kh}{h}\Big(Z_{(k+1)h}-Z_{kh}\Big),\quad t\in[kh,(k+1)h),\quad k\geq 0,
\ea
and consider the sequence of random ODEs
\ba
\label{SDE_M_h}
\bar X_t&=x+\int_0^t \Big(a(\bar X_s)+b(\bar X_s) \dot W_s^h+c(\bar X_s)\dot Z_s^h\Big)\, \di s,\quad t\geq 0.
\ea
It is well known, see \cite{Marcus-78,Kunita-95}, that the approximations $\bar X$ converge to $X$, $h\to 0$ in the sense of convergence of finite
dimensional distributions.

Taking into account that $\dot W^h$ and $\dot Z^h$ are piece-wise constant, we obtain the discrete time scheme
$\bar X=(\bar X_{kh})_{k\geq 0}$ as follows.

For $\tau\geq 0$, $w,z\in\bR^m$, consider the ordinary differential equation
\ba
\label{e:eqpsi}
&\frac{\di}{\di u} \psi(u)=a(\psi(u))\tau+   b(\psi(u))w+c(\psi(u))z\\
&\psi(0)=x,\quad u\in[0,1],
\ea
which has a unique global solution under assumptions \textbf{H$_{a,b,c}$}.
Let
\ba
\label{e:psi}
\psi(x)=\psi(x; \tau,w,z):=\psi(1;x,\tau ,w,z).
\ea
The properties of $\psi$ are studied in Appendix \ref{a:psi}.

For the time step $h>0$, consider the Euler scheme
\ba
\label{e:Euler}
&\bar X_0=x,\\
&\bar X_{(k+1)h}=\psi(\bar X_{kh}; h, W_{(k+1)h}-W_{kh},Z_{(k+1)h}-Z_{kh}),\quad k\geq 0.
\ea
The goal of this paper is to establish the weak convergence rate of this numerical scheme. It is assumed that the increments of the Brownian motion and
the pure jump process $Z$ can be simulated exactly. We also do not take into account numerical errors which may arise
in the solution of ODE \eqref{e:eqpsi}.

%
%
%
%
%

Now we formulate the Assumptions and main results of this paper.

%

\noindent
\textbf{H$_{\nu}$:} Assume that on the tails of the L\'evy measure $\nu$ satisfy
\ba
&\int_{\|z\|>1} \|z\|^3\cdot \ex^{8\|D c\|\cdot \|z \|}\,\nu(\di z)<\infty.
\ea
In view of \eqref{e:De}, Assumption \textbf{H$_{\nu}$} is granted by  the following condition which is easier to verify in practice:
\noindent
\textbf{H$'_{\nu}$:}
\ba
\int_{\|z\|>1} \|z\|^3\cdot \ex^{8d\sqrt m\cdot \|D c\|_\ex \cdot \|z \|}\,\nu(\di z)<\infty.
\ea

%
%
%
%
%
%
%
%
%
%
%
%
%
%

\begin{thm}
\label{t:existence}
Assume that conditions \emph{\textbf{H$_{a,b,c}$}}
and  \emph{\textbf{H$_{\nu}$}} hold true.
Then for any $T>0$ there is a constant $C_T$ such that for any $x\in\bR$ the following holds.

\noindent
1. There is a unique strong solution $X=(X_t)_{t\in[0,T]}$ such that
\ba
\E_x \sup_{t\in[0,T]}\|X_t\|^4\leq C_T(1+\|x\|^4).
\ea
2. For any $h>0$ the numeric scheme $\{\bar X_{kh}\}_{0\leq kh\leq T}$ satisfies
\ba\label{4thmoment}
\E_x \sup_{0\leq kh\leq T}\|\bar X_{kh}\|^4\leq C_T(1+\|x\|^4).
\ea
\end{thm}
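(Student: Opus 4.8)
The plan is to deduce both assertions from the quantitative properties of the Marcus flow $\ophi^z$ and of the Wong--Zakai flow $\psi$ recorded in Appendices~\ref{a:ophi} and~\ref{a:psi}. The estimates I will rely on are those produced by Gronwall's inequality applied to the ODEs \eqref{e:ophi-eq} and \eqref{e:eqpsi} under \textbf{H$_{a,b,c}$}: the linear growth $\|\ophi^z(x)\|\le(\|x\|+C)\ex^{\|Dc\|\,\|z\|}$, the Lipschitz bound $\|\ophi^z(x)-\ophi^z(y)\|\le\ex^{\|Dc\|\,\|z\|}\|x-y\|$ for all $z$, the second order estimate $\|\ophi^z(x)-x-c(x)z\|\le C\|z\|^2(1+\|x\|)$ for $\|z\|\le1$, the analogous bounds for $\psi(x;\tau,w,z)$ with $\|z\|$ replaced by $\tau+\|w\|+\|z\|$, and the Taylor expansion $\psi(x;h,w,z)=x+b(x)w+r(x;h,w,z)$ whose remainder $r$ absorbs the drift $a$, the Stratonovich correction, the Marcus displacement $\ophi^z(x)-x$ and all cross and higher order terms. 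Combined with \textbf{H$_\nu$}, which makes all exponential moments $\E\exp(\lambda\|Z_h\|)$, $\lambda>0$, of the L\'evy process $Z$ at a fixed time $h$ finite (together with the mixed polynomial--exponential moments), these yield in particular $\E\|\ophi^{Z_h}(x)-x\|^p\le C_p\,h\,(1+\|x\|^p)$ for $1\le p\le4$ — of order $h$, not $h^2$, because a single step contains a large jump of $Z$ only with probability $O(h)$ — and $\|\E[\ophi^{Z_h}(x)-x]\|\le Ch(1+\|x\|)$.

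For part~1 I would first invoke the standard existence and uniqueness theory for SDEs driven by a Brownian motion and a Poisson random measure (\cite{Applebaum-09,Kunita-04}), applied to the It\^o form \eqref{e:SDEI}: under \textbf{H$_{a,b,c}$} the coefficients of the continuous part are globally Lipschitz with linear growth, and by the estimates above the jump coefficients $z\mapsto\ophi^z(\cdot)-\cdot$ and $z\mapsto\ophi^z(\cdot)-\cdot-c(\cdot)z$ are Lipschitz in $x$ with finite $L^2(\nu)$, resp.\ $L^1(\nu)$, norms (this is where \textbf{H$_\nu$} is needed), so there is a unique strong solution $X$. For the fourth moment bound I would apply It\^o's formula to $t\mapsto(1+\|X_t\|^2)^2$: the drift and the predictable brackets of the continuous and compensated jump martingale parts are bounded by $C(1+\|X_{s-}\|^4)$, the supremum of the martingale parts is handled by the Burkholder--Davis--Gundy inequality, and a Gronwall argument closes the estimate, after a preliminary localization by $\tau_R=\inf\{t:\|X_t\|\ge R\}$ and passage $R\to\infty$ by Fatou.

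For part~2 I would run the same programme in discrete time. Write $\Delta_k:=\bar X_{(k+1)h}-\bar X_{kh}=b(\bar X_{kh})\,\Delta_kW+r_k$ with $\Delta_kW=W_{(k+1)h}-W_{kh}$ and $r_k=r(\bar X_{kh};h,\Delta_kW,\Delta_kZ)$, and split $r_k=\bar r_k+(r_k-\bar r_k)$ with $\bar r_k=\E[r_k\mid\rF_{kh}]$; the flow estimates and \textbf{H$_\nu$} give $\|\bar r_k\|\le Ch(1+\|\bar X_{kh}\|)$, $\E[\|r_k-\bar r_k\|^2\mid\rF_{kh}]\le Ch(1+\|\bar X_{kh}\|^2)$ and $\E[\|\Delta_k\|^4\mid\rF_{kh}]\le Ch(1+\|\bar X_{kh}\|^4)$. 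Expanding $\|\bar X_{(k+1)h}\|^4=(\|\bar X_{kh}\|^2+2\langle\bar X_{kh},\Delta_k\rangle+\|\Delta_k\|^2)^2$, taking $\E[\cdot\mid\rF_{kh}]$, and using $\E[b(\bar X_{kh})\Delta_kW\mid\rF_{kh}]=0$ and $\E[\Delta_kW\Delta_kW^\top\mid\rF_{kh}]=hI$ gives $\E[1+\|\bar X_{(k+1)h}\|^4\mid\rF_{kh}]\le(1+Ch)(1+\|\bar X_{kh}\|^4)$, hence $\max_{0\le nh\le T}\E\|\bar X_{nh}\|^4\le C_T(1+\|x\|^4)$ by iteration. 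To pull the supremum inside, decompose $\bar X_{nh}-x=M_n+A_n$ with the martingale $M_n=\sum_{k<n}\xi_k$, $\xi_k=b(\bar X_{kh})\Delta_kW+r_k-\bar r_k$, and $A_n=\sum_{k<n}\bar r_k$: then $\E\big(\sum_{k<N}\|\bar r_k\|\big)^4\le C_T(1+\|x\|^4)$ by H\"older and the grid-point bound, while Doob's $L^4$ inequality with Burkholder--Davis--Gundy gives $\E\sup_{n\le N}\|M_n\|^4\le C\big(\E\langle M\rangle_N^2+\sum_{k<N}\E\|\xi_k\|^4\big)$ with $\langle M\rangle_N\le Ch\sum_{k<N}(1+\|\bar X_{kh}\|^2)$, so that $\E\langle M\rangle_N^2\le Nh^2 C\sum_{k<N}\E(1+\|\bar X_{kh}\|^2)^2\le C_T(1+\|x\|^4)$ (Cauchy--Schwarz \emph{inside} the sum, keeping $Nh^2=T^2/N$ bounded) and $\sum_{k<N}\E\|\xi_k\|^4\le C_T(1+\|x\|^4)$ likewise.

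The main difficulty is not the probabilistic bookkeeping above but the deterministic input behind it: establishing the growth, Lipschitz and Taylor estimates for $\psi$ and $\ophi^z$ with the correct \emph{exponential} dependence on $\|z\|$. Since $c$ is only assumed to have bounded derivatives, Gronwall applied to \eqref{e:ophi-eq} and \eqref{e:eqpsi} inevitably produces factors $\ex^{\|Dc\|\,\|z\|}$, and matching the exponential rate so produced — together with the room needed to peel off polynomial factors by Cauchy--Schwarz, which is what dictates the constant $8$ in \textbf{H$_\nu$} — against the weight in \textbf{H$_\nu$} is the delicate point; a second, more technical, difficulty is the Taylor expansion of $\psi(x;h,w,z)$ in $(w,z)$ with remainder estimates uniform in $x$, needed both to isolate the mean-zero term $b(x)w$ and to certify the stated conditional moment orders of $r_k$. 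Once Appendices~\ref{a:ophi} and~\ref{a:psi} provide these, the rest is routine.
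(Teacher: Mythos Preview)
Your approach is correct and for Part~1 essentially coincides with the paper's: write \eqref{e:SDEI} in It\^o form, check that under \textbf{H$_{a,b,c}$} and \textbf{H$_\nu$} the drift and jump coefficients are Lipschitz with the right integrability, and invoke \cite{Kunita-04}. One caveat: \textbf{H$_\nu$} does \emph{not} give $\E\ex^{\lambda\|Z_h\|}<\infty$ for \emph{all} $\lambda>0$, only for $\lambda$ up to (slightly more than) $8\|Dc\|$; fortunately the moments you actually use never exceed rate $4\|Dc\|$, so this overstatement is harmless.

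For Part~2 you take a genuinely different route from the paper. The paper extends the scheme to continuous time by setting $\bar X_t=\psi(\bar X_{kh};h,W_t-W_{kh},Z_t-Z_{kh})$ for $t\in[kh,(k+1)h]$ and applies the It\^o formula to $|\bar X_t|^4$ on each subinterval, using the bounds of Lemma~\ref{l:psi} to obtain the one-step recursion $\E|\bar X_{(k+1)h}|^4-\E|\bar X_{kh}|^4\le Ch(1+\E|\bar X_{kh}|^4)$ directly. You instead stay in discrete time, isolate the martingale increment $b(\bar X_{kh})\Delta_kW+(r_k-\bar r_k)$ by hand, and expand $\|\bar X_{kh}+\Delta_k\|^4$ algebraically. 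Both arguments rest on the same deterministic input from Appendices~\ref{a:ophi} and~\ref{a:psi} and yield the same recursion; the paper's It\^o-formula shortcut avoids the explicit Taylor decomposition of $\psi$ and the separate bookkeeping of $\bar r_k$, while your version is more self-contained and, notably, actually upgrades $\max_k\E|\bar X_{kh}|^4$ to $\E\max_k|\bar X_{kh}|^4$ via the Doob/BDG argument --- a step the paper asserts but does not carry out (and does not in fact need later, since only the grid-point bound is used in the proof of Theorem~\ref{t:main}).
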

\begin{proof}
See Section \ref{s:existence}.
\end{proof}

The following result is interesting on its own.  Assume

\noindent
\textbf{H$_{\nabla\ophi,\nu}$:}
\ba
&\int_{|z|>1} \|\nabla_x\ophi^z\|^4 \,\nu(\di z)<\infty,\\  
&\int_{|z|>1}\|\nabla^2_x\ophi^z\|^2 \,\nu(\di z)<\infty,\\
&\int_{|z|>1}\|\nabla^3_x\ophi^z\|^{4/3} \,\nu(\di z)<\infty,\\
&\int_{|z|>1}\|\nabla^4_x\ophi^z\| \,\nu(\di z)<\infty.
\ea

\begin{thm}
\label{t:C4}
Under assumptions \emph{\textbf{H$_{a,b,c}$}} and \emph{\textbf{H$_{\nabla\ophi,\nu}$}}, for
any $f\in C^4_b$, any $T>0$, there is $C>0$ such that for each $t\in[0,T]$ and any multiindex $\alpha$
\ba
\|\partial^\alpha \E_x f(X_t)\| \leq C,\quad 1\leq |\alpha| \leq 4.
\ea
\end{thm}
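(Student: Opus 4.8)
The plan is to differentiate the function $u(t,x) := \E_x f(X_t)$ with respect to the spatial variable $x$ by differentiating inside the expectation, and then to control the resulting expressions uniformly in $t \in [0,T]$ via Gr\"onwall-type arguments. Concretely, I would first establish that the map $x \mapsto X_t(x)$ (the strong solution started from $x$) is four times continuously differentiable in $L^p$ for suitable $p$, and derive the system of linear SDEs satisfied by the derivative processes $\nabla_x X_t$, $\nabla^2_x X_t$, $\nabla^3_x X_t$, $\nabla^4_x X_t$. These are obtained by formally differentiating the It\^o representation \eqref{e:SDEI}: the first variation $J_t := \nabla_x X_t$ solves a linear SDE whose coefficients involve $Da$, $Db$, the Stratonovich correction terms, and — crucially — $\nabla_x \ophi^z$ evaluated along the solution; the higher variations solve linear SDEs driven by the lower-order ones, with inhomogeneous terms built from higher derivatives of the coefficients and of the Marcus flow $\ophi^z$. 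The regularity and the moment/growth bounds on $\ophi^z$ and its derivatives $\nabla^k_x \ophi^z$ needed here are exactly what is collected in Appendix~\ref{a:ophi}, and Assumption \textbf{H$_{\nabla\ophi,\nu}$} is tailored so that the compensated large-jump integrals in these variational equations have finite moments of the required order (fourth moment for $J_t$, second for $\nabla^2$, and so on, the decreasing exponents $4, 2, 4/3, 1$ matching a Burkholder--Davis--Gundy / Jensen bookkeeping in which each additional derivative is controlled in a lower $L^p$).

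The second step is the moment estimates themselves. Using \textbf{H$_{a,b,c}$}, the boundedness of all first-to-fourth derivatives of $a,b,c$, together with the $\ophi^z$-bounds from the appendix and \textbf{H$_{\nabla\ophi,\nu}$}, I would show by induction on the order of differentiation that
\ba
\E_x \sup_{s\in[0,t]} \|\nabla^k_x X_s\|^{p_k} \leq C_T, \qquad k=1,2,3,4,
\ea
with $p_1=4$, $p_2=2$, $p_3=4/3$, $p_4=1$, uniformly in $x$. The base case $k=1$ is a linear SDE with coefficients that are bounded (in operator norm) after using the appendix bounds on $\|\nabla_x\ophi^z\|$; BDG plus Gr\"onwall gives the bound. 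For $k\geq 2$ the equation for $\nabla^k_x X$ is linear in $\nabla^k_x X$ with an inhomogeneity that is a polynomial in the lower-order variations (e.g. for $k=2$ a term like $\nabla^2 a(X)(J,J)$, and the jump analogue $\nabla^2_x\ophi^z(X)(J,J)$), whose moments are finite by the induced H\"older inequality split (two factors of $J$ in $L^4$ give $L^2$, matching $p_2=2$, etc.) — this is precisely why the exponents in \textbf{H$_{\nabla\ophi,\nu}$} are arranged as they are.

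Finally, granting that differentiation under the expectation is justified (which follows once the variational processes are known to exist in the stated $L^{p_k}$ spaces and depend continuously on $x$), the chain rule gives, for a multiindex $\alpha$ with $|\alpha|=k$, an expression for $\partial^\alpha \E_x f(X_t)$ as a finite sum of terms of the form $\E_x\big[(\partial^\beta f)(X_t)\cdot (\text{product of } \nabla^{j}_x X_t \text{'s})\big]$ with the orders of the variation factors summing to $k$. Since $f \in C^4_b$, each $\|\partial^\beta f\|\leq \|f\|_{C^4_b}$, and each such term is bounded by H\"older's inequality using exactly the moment bounds above: e.g. for $k=4$ a term $\E_x[(\partial f)(X_t)\,\nabla^4_x X_t]$ is bounded by $\|\partial f\|\cdot \E_x\|\nabla^4_x X_t\| \leq C$, a term $\E_x[(\partial^2 f)(X_t)\,\nabla^2_x X_t \otimes \nabla^2_x X_t]$ by $\|\partial^2 f\|\cdot \E_x\|\nabla^2_x X_t\|^2 \leq C$, a term with $\nabla^3 X\otimes\nabla X$ by H\"older with exponents $4/3$ and $4$, and so on; in every case the product of the reciprocal exponents is $\leq 1$, so the bound is finite and uniform in $t\in[0,T]$ and in $x$. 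Taking $C$ to be the maximum of the finitely many constants arising completes the proof.

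I expect the main obstacle to be the rigorous justification of the variational calculus for the Marcus jump term: establishing that $x\mapsto X_t(x)$ is genuinely $C^4$ in the appropriate $L^p$ sense, identifying the correct linear SDEs for $\nabla^k_x X$ (the inhomogeneous terms coming from $\nabla^k_x\ophi^z$ are combinatorially involved, via Fa\`a di Bruno applied to the flow), and verifying that the compensated Poisson integrals appearing there are well-defined $L^{p_k}$ martingales — this is where Appendix~\ref{a:ophi} and Assumption \textbf{H$_{\nabla\ophi,\nu}$} do the real work, and where care is needed because, unlike the It\^o case, the jump of $X$ is the nonlinear map $\ophi^z$ rather than the linear $c(x)z$.
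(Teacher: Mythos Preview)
Your overall strategy is right and matches the paper's first step: differentiate under the expectation, derive linear variational SDEs for $\nabla_x^k X$, prove moment bounds, and close via Fa\`a di Bruno plus H\"older. Where you diverge is in the treatment of large jumps. You run the moment bounds directly on the full process $X$, targeting the specific exponents $p_k=4,2,4/3,1$. The paper instead first restricts to the truncated process $\tilde X$ (driven by $\tilde Z$ with $|\Delta\tilde Z|\leq1$) and proves $\sup_{t,x}\E|\nabla_x^k\tilde X_t|^p<\infty$ for \emph{every} $p\geq1$ and $k\leq4$ (Proposition~\ref{pPT}); no careful exponent matching is needed there because for $|z|\leq1$ all the $\nabla_x^k\ophi^z$ are uniformly bounded (Lemma~\ref{l:phi}). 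The large jumps, being compound Poisson of rate $\lambda=\nu(|z|>1)$, are then reinserted by a Dyson-series expansion of $P_t$ in terms of $\tilde P_t$ and the single-jump operator $\mathcal Qf(x)=\int_{|z|>1}(f(\ophi^z(x))-f(x))\,\nu(\di z)$; all that is required is $\|\mathcal Q\|_{C^4\to C^4}<\infty$, and this is precisely where the exponents $4,2,4/3,1$ of \textbf{H$_{\nabla\ophi,\nu}$} enter, via H\"older on the Fa\`a di Bruno expansion of $\partial_x^4(f\circ\ophi^z)$---the same bookkeeping you do, but applied to a single deterministic jump rather than to moment bounds on a stochastic process.

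The gain from the paper's split is that it sidesteps running an It\^o/Gr\"onwall argument in $L^{p_k}$ with $p_k<2$ (in particular $p_4=1$), where $|x|^{p_k}\notin C^2$ and your ``BDG plus Gr\"onwall'' step becomes genuinely delicate: for $p=1$ one cannot simply apply It\^o's formula to $|\nabla^4_x X|$, and the multiplicative jump structure makes a direct $L^1$ estimate awkward. Your direct route is plausible in principle, but this is a real gap in the sketch, not just a matter of combinatorics; the paper's two-stage decomposition is what keeps the argument short and avoids the low-$p$ issue entirely.
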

\begin{proof}
See Section \ref{s:C4}.
\end{proof}

\begin{rem}
Under assumptions  \textbf{H$_{a,b,c}$},
it follows from Lemma \ref{l:Aa} that  \textbf{H$_{\nu}$} implies \textbf{H$_{\nabla\ophi,\nu}$}.
\end{rem}

%
%
%
%

%
%
%

%
%
%
%
%

The main result of this paper is the  first order weak convergence rate of the Euler scheme \eqref{e:Euler}.
\begin{thm}
\label{t:main}
Let the assumptions \emph{\textbf{H$_{a,b,c}$}} and \emph{\textbf{H$_{\nu}$}}
hold true.
Then for any $f\in C^4_b(\bR,\bR)$ and any $T>0$ there is a constant $C=C(T,f)$ such that
for any
$n\in \mathbb{N}$ and $h>0$ such that $nh\leq T$
\ba
\label{e:main}
|\E_x f(X_{nh})-\E_x f(\bar X_{nh})|\leq C \cdot nh^2,\quad x\in\bR^d.
\ea
\end{thm}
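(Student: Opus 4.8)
The plan is to use the standard one-step-error / telescoping argument familiar from the diffusion case (Talay–Tubaro, Protter–Talay), but carried out in a way that accommodates the Marcus jump flow $\ophi^z$. Introduce the semigroup $P_t f(x) = \E_x f(X_t)$ and recall from Theorem \ref{t:C4} that, under \textbf{H}$_{a,b,c}$ and (via the Remark) \textbf{H}$_\nu\Rightarrow$\textbf{H}$_{\nabla\ophi,\nu}$, the function $u(t,x):=P_{T-nh+\,\cdot}\,$— more precisely $u_k(x):=\E_x f(X_{(n-k)h})$ — is $C^4_b$ in $x$ uniformly on $[0,T]$. Write the global error as a telescoping sum over one-step errors,
\ba
\E_x f(X_{nh})-\E_x f(\bar X_{nh})
=\sum_{k=0}^{n-1}\Big(\E_x\, u_{k+1}(X^{(k)}_h)-\E_x\, u_{k+1}(\bar X^{(k)}_h)\Big),
\ea
where, conditionally on $\bar X_{kh}$, $X^{(k)}$ is the true Marcus SDE started at $\bar X_{kh}$ and run for time $h$, and $\bar X^{(k)}_h=\psi(\bar X_{kh};h,W_{(k+1)h}-W_{kh},Z_{(k+1)h}-Z_{kh})$ is one Euler step from the same point. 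So everything reduces to bounding a single one-step error $|\E_y g(X_h)-\E_y g(\bar X^{(1)}_h)|$ for $g\in C^4_b$, uniformly in $y$, by $C h^2$; summing gives $C n h^2$.

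For the one-step estimate I would expand both $\E_y g(X_h)$ and $\E_y g(\bar X_h)$ to second order in $h$ and show the expansions agree through order $h$. The key point is that the Euler map $\psi(\cdot;h,W_h,Z_h)$ is designed to reproduce the Marcus dynamics: the ODE \eqref{e:eqpsi} driving $\psi$ has the same vector fields $a,b,c$ that appear in \eqref{e:SDEM}, with $b$ and $c$ fed the \emph{increments} $W_h=W_h-W_0$ and $Z_h=Z_h-Z_0$ rather than their rates. Using Appendix \ref{a:psi} (Taylor expansion of $\psi(y;\tau,w,z)$ in its arguments) one gets, for the Euler step,
\ba
\bar X_h = y + a(y)h + b(y)W_h + c(y)Z_h + \tfrac12 \sum_{j,l}\partial_l b^{\cdot}_j(y)\,b^l_j(y)\,(W^j_h)^2/1 + \dots
\ea
(schematically; the $W_h^2$-type terms produce, after taking expectations, the Stratonovich-to-Itô correction $\tfrac12\sum \partial_l b_j\, b^l_j\, h$, and the $Z_h$-nonlinear terms produce the Marcus compensator $\int(\ophi^z(y)-y-c(y)z)\nu(\di z)\,h$). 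On the SDE side, applying Dynkin's formula with the generator read off from the Itô form \eqref{e:SDEI} gives $\E_y g(X_h)=g(y)+\cL g(y)\,h+O(h^2)$ with remainder controlled by $\|g\|_{C^4_b}$, the moment bound of Theorem \ref{t:existence}, and \textbf{H}$_\nu$. One then checks that $\E_y g(\bar X_h)$ has the \emph{same} $g(y)+\cL g(y)h$ part, so the difference is $O(h^2)$; the matching of the $O(h)$ terms is exactly the content of the Marcus/Wong–Zakai correspondence and is where the structure of \eqref{e:eqpsi} is used.

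The main obstacle — and the place where this differs genuinely from the Itô/Protter–Talay setting — is controlling the remainders uniformly over the (possibly large) jumps of $Z$. Both $\ophi^z(y)$ and $\psi(y;h,w,z)$ depend \emph{nonlinearly} on $z$, and a priori grow like $e^{\|Dc\|\|z\|}$ in $z$ (Gronwall applied to \eqref{e:ophi-eq}); likewise their $x$-derivatives up to order $4$, by the estimates in Appendices \ref{a:ophi}–\ref{a:psi}. Hence the $O(h^2)$ remainder in the one-step error carries a factor that must be integrated against $\nu(\di z)$, and this is precisely why the exponential-moment hypothesis \textbf{H}$_\nu$ (the $\|z\|^3 e^{8\|Dc\|\|z\|}$ bound) is imposed: the exponent $8=2\cdot4$ accounts for squared fourth-order derivative terms arising from the difference $\ophi^z(\cdot)$ vs.\ $\psi(\cdot;h,\cdot,z)$ together with the $C^4_b$ regularity of $g=u_{k+1}$ from Theorem \ref{t:C4}. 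Concretely I would split the jump part of $Z$ on $[kh,(k+1)h]$ into the event of no "large" jump (probability $1-O(h)$, on which $\psi$ and the true flow differ by a smooth $O(h^2)$ discrepancy handled by ordinary Taylor expansion) and the event of exactly one large jump (probability $O(h)$), and show that on the latter event the discrepancy between $\ophi^{Z_h}(y)$ and $\psi(y;h,W_h,Z_h)$ is itself $O(h)$ after conditioning — because $\psi$ solves an ODE that, for small $\tau=h$ and small $w$, is a perturbation of the Marcus ODE \eqref{e:ophi-eq} — with $z$-dependence integrable under \textbf{H}$_\nu$. Multiple large jumps in one step contribute $O(h^2)$ and are negligible. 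Assembling these bounds gives the one-step $Ch^2$ with $C$ depending only on $T$, $\|f\|_{C^4_b}$, the $C^4_b$-norms of $a,b,c$, and the integral in \textbf{H}$_\nu$; telescoping completes the proof.
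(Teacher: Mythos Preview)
Your proposal is correct and follows essentially the same route as the paper: the telescoping over one-step errors with $g=f^{(k-1)h}\in C^4_b$ by Theorem~\ref{t:C4} is exactly the argument of Section~5, and for the one-step estimate the paper likewise matches the order-$h$ terms of $\E_y g(X_h)$ and $\E_y g(\bar X_h)$ in the bounded-jump regime (via the identity $\tilde L g(y)=Qg(\psi(y;0,0,0))$, Lemmas~\ref{l:L=Q}--\ref{l:small}) and then conditions on $N_h\in\{0,1,\ge 2\}$ for the large jumps (Section~\ref{s:proof-1-step}). One small correction: the one-step error is not uniform in $y$ but carries a factor $(1+|y|^4)$ (Theorem~\ref{t:onestep}), so the moment bound \eqref{4thmoment} on $\bar X_{(n-k)h}$ is what closes the telescoping sum---you invoke Theorem~\ref{t:existence} later but should not claim uniformity in $y$ at the one-step stage.
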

The proof of this theorem will be given in the following Sections.

\medskip

Eventually we comment on conditions    \textbf{H$_{a,b,c}$} and \textbf{H$_{\nu}$}, and the applicability of the numerical scheme.

\begin{rem}
Assumptions \textbf{H$_{a,b,c}$} are less restrictive than the assumptions in
\cite{ProTal-97} and \cite{JacodKMP-05}  where the coefficients are $C^4_b$ or smoother.
\end{rem}

\begin{rem}
Assumption \textbf{H$_{\nu}$} (or \textbf{H$'_{\nu}$}) requires existence of exponential moments of the L\'evy measure $\nu$ and looks
more restrictive than the assumptions in
\cite{ProTal-97} and \cite{JacodKMP-05}  where existence of high absolute moments (up to 32-th and higher) is demanded.
This occurs due to the non-linear nature of the ODE \eqref{e:ophi-eq}. Recall that the jump size of an It\^o SDE
$\di X_t=c(X_{t-})\,\di Z_t$
is
$\Delta X_t=c(X_{t-})\Delta Z_t$
and hence is a linear function of $\Delta Z_t$.
On the contrary, the jump size of the Marcus SDE
$\di X_t=c(X_{t})\diamond\di Z_t$
equals to $\Delta X_t= \ophi^{\Delta Z_t}(X_{t-})-X_{t-}$ and
is determined by a non-linear ODE \eqref{e:ophi-eq}. The best generic estimate for the size of this jump is given by the Gronwall
inequality. Hence exponential moments in the Marcus case serve as a natural analog of the conventional moments in the It\^o scheme.
For instance, assumptions \textbf{H$_{\nu}$} and \textbf{H$'_{\nu}$} are always satisfied for a L\'evy process $Z$ with bounded jumps.

In particilar cases one can find less restrictive assumptions on the moments of the L\'evy measure.
For instance one can show that in dimensions $d=m=1$ for the
equation $\di X_t=a(X_t)\,\di t+ b(X_t)\circ \di W_t+M X_t\diamond\di Z_t$, with $a,b\in C_b^4$ and
$M>0$, convergence \eqref{e:main} holds for any spectrally negative L\'evy process $Z$ with
$\nu((0,+\infty))=0$, and in particular for a spectrally negative stable L\'evy process.
However we were not able to find similar tractable sufficient conditions for convergence in general, especially in the multivariate case.
\end{rem}
\begin{rem}
The scheme \eqref{e:Euler} employs realizations of the increments of the L\'evy jump process $Z$. The list
of infinitely divisible distributions which can be simulated explicitly is rather short and includes $\alpha$-stable laws,
Gamma and variance Gamma distributions, as well as inverse Gaussian etc. We refer the reader to \cite[Section 3]{ProTal-97}
and \cite[Section II.6]{ContT-04} for more information on this subject and the description of the corresponding numerical algorithms.
\end{rem}

\bigskip

For the reader's convenience, in the following Sections \ref{s:existence}--\ref{s:C4} as well as in
the Appendices \ref{a:ophi} and \ref{a:psi} we assume that $d=m=1$. In the proof we will not use any of the geometrical advantages of the
one-dimensional setting and make this assumption just in order to simplify the notation  significantly.
The technical difficulties lie not in the higher dimensions of the state space but in the
analysis of the interplay of the terms $\di t$, $\circ \,\di W$ and $\diamond\, \di Z$ with the corresponding
terms in the approximation scheme  \eqref{e:Euler}. From this point of view, we are in a setting of a scalar equation driven by a three-dimensional
L\'evy process $(t,W_t,Z_t)$.

\section{Proof of Theorem \ref{t:existence}\label{s:existence}}

\begin{proof}
1. We denote
\ba
a^\diamond (x)
&=  a(x)+\frac12 b'(x)b(x) + \int_{|z|\leq 1}\Big( \ophi^z(x)-x-c(x)z  \Big)\,\nu(\di z)
+ \int_{|z|>1} \Big(\ophi^z(x)- x\Big)\,  \nu(\di z)
\ea
and write \eqref{e:SDEI} in dimension 1  as
\ba
\label{e:X}
X_t
&=X_0+\int_0^t a^\diamond ( X_s)\, \di s    
+\int_0^t b( X_s)\, \di W_s +\int_0^t \int_{\bR} \Big(\ophi^z( X_{s-})- X_{s-}\Big)\, \tilde N(\di s, \di z)
\ea
Due to Lemmas \ref{l:phi} and \ref{l:Aa}, the drift $a^\diamond$ is a Lipschitz continuous function,
and since
\ba
|\ophi^z(x)-x|\leq C(1+|x|)|z|\bI(|z|\leq 1)+  |x|(1+\ex^{\|c'\|\cdot |z|})\bI(|z|>1)
\ea
and
\ba
|\ophi^z(x)-x-\ophi^z(y)+y|\leq C|x-y|\cdot |z|\cdot \bI(|z|\leq 1)+ |x-y|(1+\ex^{\|c'\|\cdot |z|})\bI(|z|> 1) ,
\ea
existence and uniqueness of the strong solution $X$ with a finite fourth moment follows, e.g.\ from \cite[Theorem 3.1]{Kunita-04}.

2. The discrete time scheme $\bar X=(\bar X_{kh})_{k\geq 0}$ can be transformed to a continuous time process $\bar X_t, t\geq 0$ by taking
\ba
\label{e:Euler_cont}
\bar X_{t}=\psi(\bar X_{kh}; h, W_{t}-W_{kh},Z_{t}-Z_{kh}),\quad t\in [kh, (k+1)h].
\ea
Then, using the It\^o formula on the time interval $[kh, (k+1)h]$ and taking into account condition \textbf{H$_{\nu}$} and the properties of the mapping $\psi$ and its derivatives (see Lemma \ref{l:psi}), it is easy to show that
$$
\E (\bar X_{(k+1)h})^4-\E (\bar X_{kh})^4\leq Ch\Big(1+\E (\bar X_{kh})^4\Big), \quad k\geq 0
$$
with some constant $C$ which does not depend on $k$. This gives
$$
1+\E (\bar X_{kh})^4\leq (1+Ch)^{k}(1+\|x\|^4), \quad k\geq 0,
$$
which proves \eqref{4thmoment}.
\end{proof}

\section{One-step estimates \label{s:onestep}}

\begin{thm}
\label{t:onestep}
For any $f\in C^4_b$ there is a constant $C>0$ such that for any $h>0$ and $x\in\bR$
\ba
\Big|\E_x f(X_h)  -\E_x f(\bar X_h)\Big| \leq C h^2 (1+x^4)
\ea
\end{thm}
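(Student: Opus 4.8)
The plan is to compare the exact solution $X_h$ and the one-step Euler output $\bar X_h = \psi(x;h,W_h,Z_h)$ via a Taylor/It\^o expansion in the step size $h$, organized by the number of ``small'' jumps and the continuous randomness. The key structural observation is that, conditionally on the increment $\Delta Z := Z_h$, both $X_h$ and $\bar X_h$ are functions of $x$, $h$, the Brownian increment, and the jump. First I would split the event space according to the Poisson statistics of the jumps of $Z$ on $[0,h]$: the event $A_0$ of no jump of size $>\rho$ (for a suitable truncation level, possibly $\rho=1$), the event $A_1$ of exactly one such jump, and the event $A_{\geq 2}$ of two or more. On $A_{\geq 2}$ the probability is $O(h^2)$ and, using the fourth-moment bounds from Theorem \ref{t:existence} together with the Gronwall-type growth bounds for $\ophi^z$ and $\psi$ from the Appendices (and $f\in C^4_b$ so $f$ and its increments are controlled by polynomial growth of the arguments), the contribution is $\leq C h^2(1+x^4)$ directly.

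On the no-big-jump event $A_0$, the driving noise is $(h, W_h, \tilde Z_h)$ with $\tilde Z$ the compensated small-jump part; here $X_h$ solves \eqref{e:X} with only the compensated small-jump integral, and $\bar X_h=\psi(x;h,W_h,Z_h)$. I would expand both in powers of the ``total noise'' up to and including second order: for $\bar X_h$ this is a deterministic Taylor expansion of the ODE flow $\psi$ (its derivatives in $(\tau,w,z)$ at $0$ are computed from \eqref{e:eqpsi} and are polynomially bounded by Lemma \ref{l:psi}), while for $X_h$ this is an iterated-It\^o / stochastic-Taylor expansion. The claim to verify is that these two expansions agree through the terms that are $O(h)$ after taking expectations — i.e.\ the zeroth-order term $x$, the first-order terms (drift $\times h$, plus martingale terms with zero mean: $b(x)W_h$, the compensated small-jump term), and the second-order terms whose expectation is $O(h)$: $\tfrac12 b'(x)b(x)h$ from the Stratonovich correction (this is exactly why $\psi$ runs the ODE with $b(\psi)w$, reproducing the $\circ\,dW$ correction), the $\tfrac12$(Hessian)$\cdot h$ Brownian term, and the small-jump second-order term $\int_{|z|\le 1}(\ophi^z(x)-x-c(x)z)\nu(dz)\,h$ — note this last is precisely the compensator correction built into $a^\diamond$, and it must be matched by the corresponding second-order term in the expansion of $\psi$ in the $z$-variable, which is where the chain rule / ``$\ophi$ vs.\ linear'' discrepancy is absorbed. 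After matching, the remainder is genuinely third-order, $O(h^{3/2})$ or better in $L^1$ and $O(h^2)$ in expectation once odd moments vanish; I would bound it using the moment bounds of Theorem \ref{t:existence}, the smoothness of $f$, and the $C^4$ bounds on $\ophi^z,\psi$.

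On the one-big-jump event $A_1$, conditionally on the jump size $z$ (with $\|z\|>\rho$) and its time $\sigma\in[0,h]$, the exact solution behaves like: evolve continuously on $[0,\sigma)$, apply the Marcus map $x\mapsto\ophi^z(x)$, then evolve continuously on $(\sigma,h]$; the Euler step instead applies $\psi(x;h,W_h,Z_h)$ in one shot, and one checks that $\psi$ with a large $z$-component is, to leading order, $\ophi^z$ applied to a small perturbation of $x$, followed/preceded by small continuous corrections — here the Appendix estimates on $\psi$ (treating it as a perturbation of the pure-jump flow $\ophi^z$) are essential. The difference $f(X_h)-f(\bar X_h)$ on $A_1$, after conditioning, is $O(h)$ pointwise in $z$ (the two continuous pieces differ by $O(h)$, and the jump location/ordering mismatch contributes $O(h)$), its expectation over the jump time gives another factor (the jump time is uniform on $[0,h]$ so averaging the ``before/after'' asymmetry yields cancellation of the leading $O(h)$ term, leaving $O(h^2)$), and integrating over $z$ against $\nu$ together with the overall probability $P(A_1)=O(h)$ requires exactly the exponential-moment condition \textbf{H$_\nu$}: the factors $\|\nabla_x\ophi^z\|$ and higher derivatives that appear in the pointwise bound grow like $e^{\|Dc\|\|z\|}$, and by Lemma \ref{l:Aa} they are $\nu$-integrable against the weight $\|z\|^3 e^{8\|Dc\|\|z\|}$.

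The main obstacle I anticipate is the one-big-jump term: one must organize the expansion so that the leading $O(h)$ contributions from $A_1$ cancel and only $O(h^2)$ survives. This cancellation is the jump analog of the drift/diffusion matching on $A_0$, and it hinges on (i) correctly identifying that $\psi(x;h,w,z)$ with large $z$ equals $\ophi^z$ composed with an $O(h)$-close-to-identity continuous flow — a statement about the ODE \eqref{e:eqpsi} versus \eqref{e:ophi-eq} that the Appendix must supply — and (ii) averaging over the (uniform) jump time to kill the order-$h$ asymmetry between ``jump then drift for time $h-\sigma$'' and ``drift for time $\sigma$ then jump''. Keeping track of the interplay of $\di t$, $\circ\,\di W$ and $\diamond\,\di Z$ in these expansions, and ensuring every remainder term is integrable in $z$ against $\nu$ under \textbf{H$_\nu$}, is the technical heart of the argument; once Theorem \ref{t:onestep} is established, Theorem \ref{t:main} follows by the standard one-step-to-global telescoping, using the $C^4$-regularity of $x\mapsto\E_x f(X_t)$ from Theorem \ref{t:C4} to propagate the local errors.
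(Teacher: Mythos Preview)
Your decomposition into $\{N_h=0\}$, $\{N_h=1\}$, $\{N_h\geq 2\}$ is exactly the paper's, and the overall plan is correct. Two points deserve sharpening.

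On $\{N_h\geq 2\}$: since $f\in C^4_b$ is bounded, the contribution is simply $2\|f\|\,\P(N_h\geq 2)=O(h^2)$; no moment estimates are needed. On $\{N_h=0\}$: rather than match Taylor terms by hand, the paper applies It\^o's formula \emph{twice} to both $f(\tilde X_t)$ and $f(\psi(x;t,W_t,\tilde Z_t))$, yielding
\[
\E_x f(\tilde X_h)-\E_x f(\psi(x;h,W_h,\tilde Z_h))
= h\big(\tilde L f(x)-Qf(\psi(x;0,0,0))\big)+\iint(\tilde L\tilde L f - QQf),
\]
where $Q$ is the generator of $(t,W_t,\tilde Z_t)$ acting on $(\tau,w,z)\mapsto f(\psi(x;\tau,w,z))$. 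The single identity $\tilde L f(x)=Qf(\psi(x;0,0,0))$ (Lemma~\ref{l:L=Q}) encodes all the term-by-term matching you describe; the remainders are then bounded by $C(1+x^4)$ via Lemmas~\ref{l:LL} and~\ref{l:QQ}.

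On $\{N_h=1\}$ your bookkeeping is off and the claimed cancellation mechanism is neither present nor needed. The paper shows the \emph{conditional} expectation $\E\big[|f(X_h)-f(\bar X_h)|\,\big|\,N_h=1\big]$ is $O(h)(1+x^2)$ --- not $O(h^2)$ --- and this times $\P(N_h=1)=O(h)$ already gives $O(h^2)$. The uniform law of the jump time $\sigma$ is used only to disintegrate, not to produce a ``before/after'' cancellation. Concretely, on $\{N_h=1\}$ one writes $X_h=\tilde X_{\sigma,h}\circ\ophi^J\circ\tilde X_{0,\sigma-}(x)$, $\bar X_h=\psi(x;h,W_h,J+\tilde Z_h)$, and telescopes
\[
f(\bar X_h)-f(X_h)=\big[f(\bar X_h)-f(\ophi^J(x))\big]+\big[f(\ophi^J(x))-f(\tilde X_{\sigma,h}\circ\ophi^J(x))\big]+\big[\cdots-f(X_h)\big].
\]
Each bracket is $O(h)$ in conditional expectation: the second by It\^o's formula and Lemma~\ref{l:L}; the third by applying It\^o to $y\mapsto\E_y f(\tilde X_{h-s})$ composed with $\ophi^z$ (here Theorem~\ref{t:C4} and the gradient bounds on $\ophi^z$ from Lemma~\ref{l:Aa} enter, integrability in $z$ via \textbf{H$_\nu$}); the first by a second-order Taylor expansion of $(\tau,w,\xi)\mapsto f(\psi(x;\tau,w,J+\xi))$ at $(0,0,0)$, using $\psi(x;0,0,J)=\ophi^J(x)$, $\E W_h=\E\tilde Z_h=0$ to kill the linear terms, and Lemma~\ref{l:psi} on the second derivatives. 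No further cancellation is required.
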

The proof of this Theorem will be given in Section \ref{s:proof-1-step} after necessary preparations made in the next Section.

\subsection{Bounded jumps estimates}

Consider the pure jump L\'evy process
\ba
\tilde Z_t=\int_0^t \int_{|z|\leq 1} z\,\tilde N(\di z,\di s),
\ea
which is a zero mean L\'evy process with $|\Delta \tilde Z_t|\leq 1$. We
denote by $\tilde X$  the solution of the SDE
\ba
\label{e:hatX}
\tilde X_t&=x+\int_0^t a(\tilde X_s)\, \di s+\int_0^t b(\tilde X_s)\circ\di W_s+\int_0^t c(\tilde X_s)\diamond \di \tilde Z_s\\
&=  \int_0^t \tilde a(\tilde X_s)\, \di s
+\int_0^t b(\tilde X_s)\, \di W_s
+\int_0^t \int_{|z|\leq 1} \Big(\ophi^z(\tilde X_{s-})-\tilde X_{s-}\Big)\, \tilde N(\di s, \di z)\\
\ea
where we denote the \emph{effective drift} by
\ba
\tilde a(x)&=  a(x)+\frac12 b'(x)b(x) + \int_{|z|\leq 1}\Big( \ophi^z(x)-x-c(x)z  \Big)\,\nu(\di z).
\ea
We also introduce for convenience the \text{Stratonovich diffusion correction term}
\ba
\mathring a(x) =  a(x)+\frac12 b'(x)b(x) .
\ea
Note that due to Lemma \ref{l:phi}, $|\tilde a(x)|, |\mathring a(x)|\leq C(1+|x|)$ and $\tilde a', \mathring a'\in C^3_b(\bR,\bR)$.

\begin{lem}
\label{l:boundXtilde}
Assume that conditions \emph{\textbf{H$_{a,b,c}$}} hold true.
Then for any $T>0$, any  $x\in\bR$ there is a unique strong solution $\tilde X=(\tilde X_t)_{t\in[0,T]}$. Moreover for each $p\geq 1$ and
$T>0$ there is a constant $K_{T,p}>0$ such that
\ba
\E_x \sup_{t\in[0,T]}|\tilde X_t|^p\leq K_{T,p}(1+|x|^p),\quad x\in\bR.
\ea
\end{lem}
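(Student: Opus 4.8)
\textbf{Proof plan for Lemma \ref{l:boundXtilde}.}
The plan is to treat \eqref{e:hatX} as a standard It\^o SDE with jumps of bounded size and apply classical moment estimates. First I would verify that the coefficients of the It\^o form of \eqref{e:hatX} satisfy linear growth conditions: by the remark following the definition of $\tilde a$, we have $|\tilde a(x)|\leq C(1+|x|)$ and $\tilde a$ is Lipschitz (indeed $\tilde a'\in C_b^3$), while $b\in C_b^4$ with bounded derivatives gives $|b(x)|\leq C(1+|x|)$ and Lipschitz continuity. For the jump coefficient, Lemma \ref{l:phi} provides, for $|z|\leq 1$, the bounds $|\ophi^z(x)-x|\leq C(1+|x|)|z|$ and $|\ophi^z(x)-x-\ophi^z(y)+y|\leq C|x-y|\,|z|$ (these are exactly the $\bI(|z|\leq 1)$ parts of the estimates already displayed in Section \ref{s:existence}); since $\int_{|z|\leq 1}|z|^2\,\nu(\di z)<\infty$ automatically for a L\'evy measure, the jump coefficient $x\mapsto(\ophi^z(x)-x)$ is square-integrable in $z$ with linear growth in $x$ and Lipschitz in $x$, uniformly in the relevant sense. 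Existence and uniqueness of a strong solution $\tilde X$ on $[0,T]$ then follows from \cite[Theorem 3.1]{Kunita-04} (or any standard SDE-with-jumps existence theorem), exactly as in the proof of Theorem \ref{t:existence}.

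For the moment bound, I would apply the It\^o formula to $|\tilde X_{t\wedge\sigma_R}|^p$ where $\sigma_R$ is the exit time from the ball of radius $R$ (to justify the computation before passing to the limit $R\to\infty$), take expectations so the martingale terms vanish, and estimate. The drift term contributes $p|\tilde X_s|^{p-2}\tilde X_s\,\tilde a(\tilde X_s)\leq C(1+|\tilde X_s|^p)$; the Brownian quadratic-variation term contributes $\tfrac12 p(p-1)|\tilde X_s|^{p-2}b(\tilde X_s)^2\leq C(1+|\tilde X_s|^p)$; and the compensated-jump term, via the elementary inequality $\big||x+u|^p-|x|^p-p|x|^{p-2}xu\big|\leq C_p(|x|^{p-2}u^2+|u|^p)$ together with $|\ophi^z(x)-x|\leq C(1+|x|)|z|$ and $\int_{|z|\leq 1}(|z|^2+|z|^p)\,\nu(\di z)<\infty$, also contributes at most $C(1+|\tilde X_s|^p)$. (For $1\leq p<2$ one uses instead the bound $\big||x+u|^p-|x|^p\big|\leq C_p|u|(|x|^{p-1}+|u|^{p-1})$ after splitting the compensator and the jump measure, which is routine.) This yields
\ba
\E_x |\tilde X_{t\wedge\sigma_R}|^p\leq |x|^p+C\int_0^t\big(1+\E_x|\tilde X_{s\wedge\sigma_R}|^p\big)\,\di s,
\ea
and Gronwall's inequality gives $\E_x|\tilde X_{t\wedge\sigma_R}|^p\leq C_{T,p}(1+|x|^p)$ uniformly in $R$; letting $R\to\infty$ by Fatou removes the localization. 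Finally, to upgrade the pointwise-in-$t$ bound to the supremum bound, I would apply the Burkholder--Davis--Gundy inequality to the Brownian and compensated-Poisson martingale parts of $\tilde X_t$, bounding their quadratic variations by $\int_0^T(1+|\tilde X_s|^p)\,\di s$ and invoking the bound just obtained, which controls $\E_x\sup_{t\leq T}|\tilde X_t|^p$.

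The only mildly delicate point is the treatment of the jump term in the It\^o formula when $p$ is not an even integer, where one must be slightly careful about which elementary convexity inequality to use and about splitting the integral against $N$ versus against the compensator $\nu$; but since all jumps are bounded by $1$ and $\nu$ integrates $|z|^2$ near the origin, no moment condition on the tails of $\nu$ is needed here, and the argument is entirely standard. Note that Assumption \textbf{H$_\nu$} plays no role in this lemma precisely because $\tilde Z$ has bounded jumps.
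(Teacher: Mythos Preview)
Your proposal is correct and follows essentially the same approach as the paper: verify linear growth and Lipschitz conditions on $\tilde a$, $b$, and the jump coefficient $\ophi^z(x)-x$ via Lemma~\ref{l:phi}, then invoke \cite[Theorem~3.1]{Kunita-04} exactly as in the proof of Theorem~\ref{t:existence}, observing that no tail condition on $\nu$ is needed since $|z|\leq 1$. The paper's own proof is a single sentence pointing back to Theorem~\ref{t:existence}; you have simply spelled out the standard It\^o-formula/Gronwall/BDG argument that underlies Kunita's theorem, which is fine and arguably more self-contained.
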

\begin{proof}
 The proof is the same as in Theorem \ref{t:existence} with no conditions on big jumps $|z|>1$.
\end{proof}

The process $\tilde X$ is a strong Markov process with the generator
\ba
\label{e:L1}
\tilde Lf(x)= \mathring a(x)f'(x)+\frac12 b^{2}(x)f''(x) +\int_{|z|\leq 1}\Big(f(\ophi^z(x))-f(x)-f'(x)c(x)z \Big)\nu(\di z),\quad
f\in C^2_c(\bR,\bR).
\ea

%

\begin{lem}
\label{l:L}
There is a constant $C>0$ such that for each $f\in C^2(\bR,\bR)$ with bounded first and second derivatives
 \ba
|\tilde L f(x)|\leq C\Big(\|f'\| + \|f''\|\Big)(1+x^2),\quad x\in\bR. 
\ea
\end{lem}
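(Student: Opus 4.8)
\textbf{Proof plan for Lemma \ref{l:L}.}
The estimate splits naturally according to the three terms in the expression \eqref{e:L1} for $\tilde L f$. For the first two terms, $\mathring a(x)f'(x)$ and $\frac12 b^2(x)f''(x)$, I would simply invoke the bounds already recorded: $|\mathring a(x)|\leq C(1+|x|)$ (noted after the definition of $\mathring a$) and $\|b'\|<\infty$ together with boundedness of $b^2$ on bounded sets — but in fact $b$ itself need not be bounded, so here I would use $|b(x)|\leq |b(0)|+\|b'\|\cdot|x|\leq C(1+|x|)$, hence $b^2(x)\leq C(1+x^2)$. This gives $|\mathring a(x)f'(x)|\leq C\|f'\|(1+|x|)\leq C\|f'\|(1+x^2)$ and $\frac12 b^2(x)|f''(x)|\leq C\|f''\|(1+x^2)$, which are both of the desired form.

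The main work is the jump integral $\int_{|z|\leq 1}\bigl(f(\ophi^z(x))-f(x)-f'(x)c(x)z\bigr)\,\nu(\di z)$. I would estimate the integrand by a second-order Taylor expansion. Writing $\Delta=\ophi^z(x)-x$, we have $f(\ophi^z(x))-f(x)=f'(x)\Delta+\frac12 f''(\xi)\Delta^2$ for some intermediate point $\xi$, so the integrand equals $f'(x)(\Delta-c(x)z)+\frac12 f''(\xi)\Delta^2$. Now I would use the estimates from Lemma \ref{l:phi} (and Appendix \ref{a:ophi}): for $|z|\leq 1$ one has $|\ophi^z(x)-x-c(x)z|\leq C(1+|x|)|z|^2$ — this is the standard second-order estimate for the Marcus flow, coming from differentiating $u\mapsto \ophi^z(u;x)$ twice and using $\|c'\|<\infty$ together with the linear growth of $c$ — and $|\ophi^z(x)-x|\leq C(1+|x|)|z|$. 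Therefore $|f'(x)(\Delta-c(x)z)|\leq C\|f'\|(1+|x|)|z|^2$ and $\frac12|f''(\xi)|\Delta^2\leq C\|f''\|(1+|x|)^2|z|^2\leq C\|f''\|(1+x^2)|z|^2$. Integrating against $\nu$ over $\{|z|\leq 1\}$ and using that $\int_{|z|\leq 1}|z|^2\,\nu(\di z)<\infty$ (a property of any L\'evy measure), the jump term is bounded by $C(\|f'\|+\|f''\|)(1+x^2)$.

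Collecting the three bounds yields $|\tilde L f(x)|\leq C(\|f'\|+\|f''\|)(1+x^2)$, as claimed. The only genuinely nontrivial input is the quadratic-in-$z$ bound on $\ophi^z(x)-x-c(x)z$ for small $z$, which is precisely what Lemma \ref{l:phi} and the Appendix on $\ophi$ are designed to supply; once that is in hand the argument is a routine Taylor estimate. I would therefore expect no real obstacle here, only bookkeeping to make sure the growth in $x$ never exceeds quadratic — which it does not, because each of $\mathring a$, $b$, and $\ophi^z(\cdot)-(\cdot)$ grows at most linearly in $x$, and the worst term $f''(\xi)\Delta^2$ contributes the single factor $(1+|x|)^2$.
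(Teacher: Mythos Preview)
Your proof is correct and follows essentially the same approach as the paper: bound the drift and diffusion terms via linear growth of $\mathring a$ and $b$, and control the jump integrand by a second-order Taylor estimate together with the small-$z$ bounds on the Marcus flow from Lemma~\ref{l:phi}. The only cosmetic difference is that the paper writes the remainder as an integral along the flow parameter,
\[
f(\ophi^z(x))-f(x)-f'(x)c(x)z
= z^2\int_0^1\!\int_0^s\big(f''c^2+f'cc'\big)(\ophi^z(u;x))\,\di u\,\di s,
\]
which extracts the factor $z^2$ directly and is recorded as \eqref{e:ch} for reuse in Lemma~\ref{l:LL}; your Lagrange-form remainder together with the separate bound on $\Delta-c(x)z$ accomplishes the same thing.
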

\begin{proof}
Taking into account the linear growth condition for $\mathring a$ and $b$ we get for some $C>0$
\ba
\Big|\mathring a(x)f'(x)+\frac12 b^{2}(x)f''(x)\Big|\leq C\|f'\| (1+|x|) + C\|f''\|(1+x^2).
\ea
To estimate the integral term in \eqref{e:L1} we note that
\ba
\label{e:ch}
f(\ophi^z(x))-f(x) - f'(x)c(x)z &
=z^2 \int_0^1 \int_0^s  \Big(f''c^2+ f'cc'\Big)(\ophi^z(u;x))  \,\di u\,\di s\,
\ea
and Lemma \ref{l:phi} yields
\ba
\Big|\int_{|z|\leq 1}\Big(f(\ophi^z(x))-f(x)-f'(x)c(x)z \Big)\nu(\di z)\Big|
\leq C( \|f''\| + \|f'\|) (1+x^2).
\ea
\end{proof}

\begin{lem}
\label{l:LL}
Let $f\in C^4_b(\bR,\bR)$. Then there is a constant $C>0$ such that for all $x\in\bR$
 \ba
&|\tilde L\tilde Lf(x)|\leq C(1+x^4).
\ea
\end{lem}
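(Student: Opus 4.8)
The plan is to set $g:=\tilde L f$ and to prove two facts: first, that $g\in C^2(\bR,\bR)$ with $|g'(x)|+|g''(x)|\le C(1+x^2)$; and second, that $\tilde L g$ — which is precisely $\tilde L\tilde L f$ — satisfies the claimed bound. The intermediate step is needed because, although Lemma~\ref{l:L} already gives $|g(x)|\le C(1+x^2)$, its hypothesis (bounded first and second derivatives) is \emph{not} met by $g$, so one cannot simply iterate it; instead one must carry polynomial weights through the second application of $\tilde L$.

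For the first fact I would write $g(x)=\mathring a(x)f'(x)+\tfrac12 b^2(x)f''(x)+J(x)$ with $J(x)=\int_{|z|\le1}\big(f(\ophi^z(x))-f(x)-f'(x)c(x)z\big)\,\nu(\di z)$, and estimate the polynomial part term by term. Recalling that $\mathring a$ has linear growth with $\mathring a'\in C^3_b$, that $b,b',b'',b''',b''''$ are as in \textbf{H$_{a,b,c}$} (so $b$ grows at most linearly and $b'b$ grows at most linearly), each of the first two derivatives of $\mathring a f'+\tfrac12 b^2 f''$ consists of terms of growth at most $1+x^2$, the dominant one being $\tfrac12 b^2 f^{(k+2)}$, which is finite for $k\le 2$ exactly because $f\in C^4_b$. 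For $J$ I would use the identity \eqref{e:ch} (valid for any $C^2$ function), $f(\ophi^z(x))-f(x)-f'(x)c(x)z=z^2\int_0^1\int_0^s H(\ophi^z(u;x))\,\di u\,\di s$ with $H:=f''c^2+f'cc'$, and differentiate under the integral sign. This is justified by Lemma~\ref{l:phi}: for $|z|\le1$ one has $|\ophi^z(u;x)|\le C(1+|x|)$ and $|\partial_x\ophi^z(u;x)|+|\partial_x^2\ophi^z(u;x)|\le C$, so the integrand and its $x$-derivatives are $O(z^2)$ with constants locally bounded in $x$, while $\int_{|z|\le1}z^2\,\nu(\di z)<\infty$. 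Since $H,H',H''$ grow at most quadratically (the worst terms involve $c^2$, with all other factors such as $cc'$, $cc''$, $cc'''$, $(c')^2$ bounded or linear by \textbf{H$_{a,b,c}$}), composing with $\ophi^z(u;x)$ and using the linear-growth bound gives $|J^{(k)}(x)|\le C(1+x^2)$, $k=0,1,2$. Hence $|g'(x)|+|g''(x)|\le C(1+x^2)$.

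For the second fact I would apply $\tilde L$ to $g$ (legitimate since $g\in C^2$): $\tilde L g(x)=\mathring a(x)g'(x)+\tfrac12 b^2(x)g''(x)+\int_{|z|\le1}\big(g(\ophi^z(x))-g(x)-g'(x)c(x)z\big)\,\nu(\di z)$. The first term is $\le C(1+|x|)(1+x^2)\le C(1+x^4)$ and the second $\le C(1+x^2)(1+x^2)= C(1+x^4)$. For the jump term, I would apply \eqref{e:ch} with $g$ in place of $f$ to get $g(\ophi^z(x))-g(x)-g'(x)c(x)z=z^2\int_0^1\int_0^s\big(g''c^2+g'cc'\big)(\ophi^z(u;x))\,\di u\,\di s$; on $|z|\le1$, using $|g''(y)|\le C(1+y^2)$, $|g'(y)|\le C(1+y^2)$, $|c(y)|\le C(1+|y|)$ and $|\ophi^z(u;x)|\le C(1+|x|)$, the integrand is bounded by $z^2\cdot C(1+x^4)$, and integrating against $\nu$ restricted to $\{|z|\le1\}$ leaves the finite factor $\int_{|z|\le1}z^2\,\nu(\di z)$. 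Summing the three contributions yields $|\tilde L\tilde L f(x)|\le C(1+x^4)$. Note no hypothesis on large jumps enters, since $\tilde L$ only sees $\nu$ on $\{|z|\le1\}$.

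The main obstacle is the bookkeeping of growth orders: one must verify that two successive applications of $\tilde L$ do not push the polynomial degree past $4$. This is tight precisely where the quadratic coefficient $b^2$ — or the quadratic factor $c^2(\ophi^z(u;x))$ arising from the flow bound — multiplies the quadratic-growth function $g''$; there is no slack left at that point. The differentiation-under-the-integral steps and the verification that $H,H',H''$ and the corresponding quantity for $g$ grow at most quadratically are routine given the bounds on $\ophi^z$ collected in Appendix~\ref{a:ophi}.
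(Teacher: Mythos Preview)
Your proof is correct and follows the same overall strategy as the paper: set $G=\tilde L f$, establish $|G'(x)|+|G''(x)|\le C(1+x^2)$, and then feed these bounds into $\tilde L G$ via the Taylor-remainder identity \eqref{e:ch} to get the $1+x^4$ estimate.

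The organisation differs in one respect worth noting. For the derivatives of the jump part $J(x)=\int_{|z|\le1}\big(f(\ophi^z(x))-f(x)-f'(x)c(x)z\big)\,\nu(\di z)$, the paper differentiates the integrand \emph{before} extracting the $z^2$ factor: it computes $G'$ and $G''$ directly, recognises the pieces $\tilde L f'$ and $\tilde L f''$ inside (thereby reusing Lemma~\ref{l:L}), and handles the residual integrals by the expansions $\ophi^z_x(x)-1=c'(x)z+\phi_x$, $\ophi^z_{xx}(x)=c''(x)z+\phi_{xx}$ from Lemma~\ref{l:phi} to recover integrability in $z$. You instead apply \eqref{e:ch} first, writing $J(x)=\int_{|z|\le1}z^2\int_0^1\int_0^s H(\ophi^z(u;x))\,\di u\,\di s\,\nu(\di z)$ with $H=f''c^2+f'cc'$, and then differentiate through, so the $z^2$ factor is already in place and the work reduces to bounding $H,H',H''$ and the flow derivatives. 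This is a cleaner bookkeeping device and avoids the algebraic identification of $\tilde L f'$, $\tilde L f''$; the paper's route, on the other hand, makes more transparent \emph{why} the polynomial degree rises by exactly two at each application of $\tilde L$. Both yield the same bounds with the same inputs from Appendix~\ref{a:ophi} and \textbf{H$_{a,b,c}$}.
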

\begin{proof}
Denote $G(x):=\tilde Lf(x)$.

Then
\ba
(\tilde L\tilde Lf)(x)= (\tilde LG)(x)&= \mathring a(x)G'(x)+\frac12 b^{2}(x)G''(x)
+\int_{|z|\leq 1}\Big(G(\ophi^z(x))-G(x)-G'(x)c(x)z\Big)\,\nu(\di z).
\ea
We will show that $|G'(x)|\leq C(1+x^2)$, $|G''(x)|\leq C(1+x^2)$ and
\ba
\Big|\int_{|z|\leq 1}\Big(G(\ophi^z(x))-G(x)-G'(x)c(x)z\Big)\,\nu(\di z)\Big|\leq C(1+x^4).
\ea

\paragraph{1. The first derivative $G'$.} We have
\ba
\label{e:G1}
G'(x)= &
\mathring a'(x) f'(x)+ \Big(\mathring a(x)+ bb'(x)\Big)f''(x)+\frac12 b^2(x)f'''(x)\\
&+\int_{|z|\leq 1}\Big(f'(\ophi^z(x))\ophi^z_x(x)-f''(x)c(x)z-f'(x)c'(x)z -f'(x)\Big)\,\nu(\di z)
\\&=(\tilde Lf')(x)+  \mathring a'(x)  f'(x)+ bb'(x) f''(x)
+\int_{|z|\leq 1}\Big(f'(\ophi^z(x))\big(\ophi^z_x(x)-1\big)-f'(x)c'(x)z\Big)\,\nu(\di z).
\ea
The term $\tilde Lf'$ is estimated by Lemma \ref{l:L} by $C(1+x^2)$, the term
$\mathring a'(x)  f'(x)$
 by $C$ and the term $bb'f''$ by $C(1+|x|)$.
To estimate the integral term,
we use Lemma \ref{l:phi} to get
\ba
\label{e:est1}
f'(\ophi^z(x))\big(\ophi^z_x(x)-1\big)-f'(x)c'(x)z&= f'(\ophi^z(x))\big(c'(x)z+\phi_x(1;x,z)\big)-f'(x)c'(x)z\\
&=c'(x)z^2 \int_0^1 (f'c)(\ophi^z(s;x))\,\di s + f'(\ophi^z(x)) \phi_x(1;x,z)
\ea
%
Taking into account the bounds from Lemma \ref{l:phi} we conclude that
the integral term is estimated by $C(1+|x|)$
%
%
%
and eventually
\ba
\label{e:G11}
|G'(x)|\leq C  (1+|x|^2).
\ea

\paragraph{2. The second derivative $G''$.}
Straightforward differentiation yields
\ba
G''(x)&=\mathring a''(x)f'(x)+ \Big(2 \mathring a'(x)+ (b(x)b'(x))' \Big)f''(x)+ \Big(\mathring a(x)+ 2b(x)b'(x)\Big)f'''(x) + \frac12 b^2(x)f''''(x)\\
&+\int_{|z|\leq 1}\Big( f''(\ophi^z(x))(\ophi^z_x(x))^2    +f'(\ophi^z(x)) \ophi^z_{xx}(x)
-z f'(x) c''(x)- f''(x)(1+2 z c'(x)) -z f'''(x)c(x) \Big)\,\nu(\di z).
\ea
Recalling that
\ba
(\tilde Lf'')(x)&=\mathring a(x)f'''(x)+\frac12 b^{2}(x)f^{(4)}(x)+\int_{|z|\leq 1}\Big(f''(\ophi(x,z))-f''(x)-f'''(x)c(x)z \Big)\,\nu(\di z)
\ea
we can rewrite
\ba
\label{e:G2}
G''(x)&= (\tilde Lf'')(x)
+ \mathring a''(x)f'(x)+ \Big(2 \mathring a'(x)+ (b(x)b'(x))' \Big)f''(x)+ 2b(x)b'(x)f'''(x)  \\
&+\int_{|z|\leq 1}\Big( f''(\ophi^z(x))\Big(\ophi^z_x(x)^2 -1\Big)   +f'(\ophi^z(x))\ophi^z_{xx}(x)
-z f'(x) c''(x)- 2z f''(x) c'(x) \Big)\,\nu(\di z).
\ea
The first line of the previous formula is bounded by $C(1+x^2)$.
We estimate the integrand in its second line similarly to \eqref{e:est1} with the help of Lemma \ref{l:phi}. Denote for brevity
$\phi_x= \phi_x(1;x,z)$, $\phi_{xx}= \phi_{xx}(1;x,z)$.
\ba
f''(\ophi^z(x)) &\Big(\ophi^z_x(x)^2 -1\Big)  +f'(\ophi^z(x)) \ophi^z_{xx}(x) -z f'(x) c''(x)- 2z f''(x) c'(x)\\
&=f''(\ophi^z(x)) \Big( c'(x)^2z^2 +\phi_x^2+ 2c'(x)z+2c'(x)z\phi_x+2\phi_x        \Big)\\
&\qquad +f'(\ophi^z(x)) \Big( c''(x)z+\phi_{xx}  \Big) -z f'(x) c''(x)- 2z f''(x) c'(x)\\
&=2 z c'(x) \Big(f''(\ophi^z(x))-f''(x)\Big) +  z c''(x)\Big( f'(\ophi^z(x))-f'(x)\Big)\\
&\qquad +f''(\ophi^z(x)) \Big( c'(x)^2z^2 +\phi_x^2 +2c'(x)z\phi_x+2\phi_x        \Big)
+f'(\ophi^z(x)) \phi_{xx} \\
&=2 z^2 c'(x) \int_0^1 (f'''c)(\ophi^z(s;x))\,\di s +  z^2 c''(x)\int_0^1 (f'c)(\ophi^z(s;x))\,\di s\\
&\qquad +f''(\ophi^z(x)) \Big( c'(x)^2z^2 +\phi_x^2 +2c'(x)z\phi_x+2\phi_x        \Big)
+f'(\ophi^z(x)) \phi_{xx} ,
\ea
and hence the integral term in \eqref{e:G2} is bounded by $C(1+|x|)$.
%
Eventually
\ba
\label{e:G22}
|G''(x)|\leq C  (1+|x|^2).
\ea

\paragraph{3. The integral term of the generator.}

For $G(x):=\tilde Lf(x)$ we recall \eqref{e:ch}, \eqref{e:G11},  \eqref{e:G22}, and the estimate $\sup_{|z|\leq 1}|\ophi^z(x)|\leq C(1+|x|)$, to get
\ba
\Big|\int_{|z|\leq 1}\Big(G(\ophi^z(x))-G(x)-G'(x)c(x)z\Big)\,\nu(\di z)\Big|
\leq C(1+x^4).
\ea
\end{proof}


For the function $\psi=\psi(x;\tau,w,z)$ defined in \eqref{e:eqpsi} and \eqref{e:psi}, we introduce the process
\ba
Y_t=\psi(x;t,W_t,\tilde Z_t),\quad t\in[0,h].
\ea
Since $\psi(\cdot;\cdot,\cdot,\cdot)\in C^4(\bR^4,\bR)$,
the It\^o formula implies that $Y$ is an It\^o process and
\ba
\E_x f(Y_t)=f(x)+\int_0^t \E Qf(\psi(x;s,W_s,\tilde Z_s))\,\di s,\quad f\in C^2_c(\bR,\bR),
\ea
with the generator
\ba
\label{e:Q}
Qg(\tau,w,z)=g_\tau(\tau,w,z)+\frac12 g_{ww}(\tau,w,z)
+\int_{|\xi|\leq 1} \Big(g(\tau,w,z+\xi)-g(\tau,w,z)-g_z(\tau,w,z)\cdot \xi\Big)\, \nu(\di \xi),
\ea
defined on smooth real-valued functions $g(\tau,w,z)$.

\begin{lem}
\label{l:L=Q}
Let $f\in C^2_b(\bR,\bR)$. Then
 \ba
 \label{e:L=Q}
\tilde Lf(x)=Qf(\psi(x;0,0,0)).
\ea
\end{lem}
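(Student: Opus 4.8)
The plan is to compute the right-hand side $Qf(\psi(x;0,0,0))$ explicitly by evaluating the partial derivatives of $g(\tau,w,z) := f(\psi(x;\tau,w,z))$ at the origin $(\tau,w,z)=(0,0,0)$, and to recognize the resulting expression as $\tilde Lf(x)$ as given in \eqref{e:L1}. Since $\psi(x;0,0,0)=x$ by the initial condition in \eqref{e:eqpsi}, we have $g(0,0,0)=f(x)$, and the task reduces to identifying the first-order derivatives $g_\tau$, $g_z$, the second-order derivative $g_{ww}$ at the origin, and the nonlocal increment $g(0,0,\xi)-g(0,0,0)-g_z(0,0,0)\xi = f(\psi(x;0,0,\xi))-f(x)-f'(x)c(x)\xi$.

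First I would record the derivatives of the flow $\psi(1;x,\tau,w,z)$ with respect to the parameters $\tau,w,z$ at $(\tau,w,z)=0$. Differentiating the ODE \eqref{e:eqpsi} in each parameter and solving the resulting linear variational equations along the (constant) trajectory $\psi(u)\equiv x$ gives $\partial_\tau\psi|_0 = a(x)$, $\partial_w\psi|_0 = b(x)$, $\partial_z\psi|_0 = c(x)$, and for the second $w$-derivative one obtains $\partial_{ww}^2\psi|_0 = b'(x)b(x)$ (the variational equation for the second derivative picks up the term $b'(x)\cdot(\partial_w\psi|_0) = b'(x)b(x)$). Consequently, by the chain rule,
\ba
g_\tau(0,0,0) &= f'(x)a(x),\qquad g_z(0,0,0) = f'(x)c(x),\\
g_{ww}(0,0,0) &= f''(x)b(x)^2 + f'(x)b'(x)b(x).
\ea
Substituting into \eqref{e:Q}, the local part of $Qf(\psi(x;0,0,0))$ equals
\ba
f'(x)a(x) + \tfrac12\big(f''(x)b(x)^2 + f'(x)b'(x)b(x)\big) = \mathring a(x)f'(x) + \tfrac12 b(x)^2 f''(x),
\ea
which matches the local part of $\tilde L f(x)$ in \eqref{e:L1} once one recalls $\mathring a(x) = a(x)+\tfrac12 b'(x)b(x)$.

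For the nonlocal part I would note that for fixed $\xi$, the curve $u\mapsto\psi(u;x,0,0,\xi)$ solves $\tfrac{\di}{\di u}\psi = c(\psi)\xi$, $\psi(0)=x$, which is exactly the Marcus ODE \eqref{e:ophi-eq} with jump $z=\xi$; hence $\psi(x;0,0,\xi) = \ophi^\xi(x)$. Therefore the integrand in \eqref{e:Q} at $(\tau,w,z)=(0,0,0)$ becomes $f(\ophi^\xi(x)) - f(x) - f'(x)c(x)\xi$, and integrating against $\nu$ over $\|\xi\|\le 1$ reproduces exactly the jump term of $\tilde L f$ in \eqref{e:L1}. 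Adding the local and nonlocal parts yields $Qf(\psi(x;0,0,0)) = \tilde L f(x)$, as claimed. The only mildly delicate point is the correct bookkeeping of the second variational derivative $\partial_{ww}^2\psi|_0$, which is where the Stratonovich correction $\tfrac12 b'b$ is generated; everything else is a direct application of the chain rule together with the identification $\psi(x;0,0,\xi)=\ophi^\xi(x)$ coming straight from comparing \eqref{e:eqpsi} and \eqref{e:ophi-eq}. The smoothness needed to justify differentiating under the integral and applying the chain rule is provided by $f\in C_b^2$ together with the regularity of $\psi$ established in Appendix \ref{a:psi} (Lemma \ref{l:psi}) and of $\ophi$ in Appendix \ref{a:ophi} (Lemma \ref{l:phi}).
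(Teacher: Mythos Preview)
Your proposal is correct and follows essentially the same approach as the paper's own proof: compute $\psi_\tau$, $\psi_w$, $\psi_z$, $\psi_{ww}$ at $(\tau,w,z)=(0,0,0)$, identify $\psi(x;0,0,\xi)=\ophi^\xi(x)$ from comparing \eqref{e:eqpsi} with \eqref{e:ophi-eq}, and then read off that the expression \eqref{e:Qf} at the origin coincides with \eqref{e:L1}. The only difference is cosmetic---you derive the values of the $\psi$-derivatives directly from the variational equations, whereas the paper simply cites the formulae recorded in Lemma~\ref{l:psi}.
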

\begin{proof}
For each $x\in\bR$, applying \eqref{e:Q} to $g(\tau,w,z):=f\circ \psi(x; \tau,w,z)$ we get
\ba
\label{e:Qf}
&Qf(\psi(x;\tau,w,z))=
f'(\psi(x;\tau,w,z)) \psi_\tau(x;\tau,w,z)\\
&+\frac12 f''(\psi(x;\tau,w,z)) \cdot (\psi_w(x;\tau,w,z))^2 +\frac12 f'(\psi(x;\tau,w,z))\psi_{ww}(x;\tau,w,z)))   \\
&+\int_{|\xi|\leq 1} \Big( f(\psi(x;\tau,w,z+\xi))-f(\psi(x;\tau,w,z))-f'(\psi(x;\tau,w,z))\psi_z(x;\tau,w,z)\cdot \xi\Big)\, \nu(\di \xi).
\ea
Recalling that $\psi(x;0,0,z)=\ophi^z(x)$ and $\psi(x;0,0,0)=x$, and
taking into account the formulae from Lemma \ref{l:psi} we find that
\ba
\psi_\tau(x;0,0,0)&=a(x),\\
\psi_w(x;0,0,0)&=b(x),\\
\psi_{ww}(x;0,0,0)&=bb'(x),\\
\psi_z(x;0,0,0)&=c(x),\\
\ea
and hence we get \eqref{e:L=Q}.
\end{proof}

\begin{lem}
\label{l:QQ}
Let $f\in C^4_b(\bR,\bR)$. Then there is a constant $C>0$ such that for any $\tau\geq 0$, $w\in\bR$, $z\in\bR$ and $x\in\bR$
\ba
&|QQf(\psi(x;\tau,w,z))  |\leq C(1+x^4)\cdot  \ex^{C(\tau+|w|+|z|)}.
\ea

\end{lem}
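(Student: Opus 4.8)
The plan is to compute $QQf(\psi(x;\tau,w,z))$ explicitly and bound each resulting term, using the already-established structure $Qf(\psi(x;\tau,w,z)) = (\tilde L_{\tau,w,z} f)(\psi(x;\tau,w,z))$ in spirit (cf.\ Lemma \ref{l:L=Q}), together with the derivative estimates for $\psi$ from Lemma \ref{l:psi} in Appendix \ref{a:psi}. First I would set $G(\tau,w,z) := Qf(\psi(x;\tau,w,z))$, which by \eqref{e:Qf} is a smooth function of $(\tau,w,z)$; then $QQf(\psi(x;\tau,w,z)) = QG(\tau,w,z) = G_\tau + \tfrac12 G_{ww} + \int_{|\xi|\le 1}(G(\tau,w,z+\xi) - G(\tau,w,z) - G_z(\tau,w,z)\xi)\,\nu(\di\xi)$. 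The key point is that $G$ is built from $f^{(k)}\circ\psi$, $k\le 3$, multiplied by products of partial derivatives of $\psi$ (up to second order in the diffusive directions, and the finite-difference operator in $z$ producing $\ophi$-type increments). Differentiating $G$ once more in $\tau,w$ or applying the $z$-finite-difference operator brings in $f^{(k)}\circ\psi$, $k\le 4$, times products of derivatives of $\psi$ up to the relevant order — exactly why $f\in C^4_b$ is needed and no higher.

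The main mechanism for controlling the $\nu$-integral term is the same second-order Taylor expansion used throughout Section \ref{s:onestep}: for the inner increment in $z$ one writes, for any smooth $H$,
\ba
H(z+\xi)-H(z)-H_z(z)\xi = \xi^2\int_0^1\int_0^s H_{zz}(z+u\xi)\,\di u\,\di s,
\ea
so that the $\nu$-integral over $|\xi|\le 1$ is controlled by $\sup_{|u|\le 1}|H_{zz}(z+u)|\cdot\int_{|\xi|\le1}|\xi|^2\,\nu(\di\xi)$, and $\int_{|\xi|\le 1}|\xi|^2\,\nu(\di\xi)<\infty$. Applying this with $H$ a second $z$-derivative of things of the form $f^{(k)}(\psi)\cdot(\text{products of }\psi\text{-derivatives})$ reduces everything to pointwise bounds on $\psi$ and its partial derivatives at the shifted argument $(\tau,w,z+u\xi)$ with $|u\xi|\le 1$.

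The crucial input — and the step I expect to be the main obstacle — is the growth control of $\psi$ and its derivatives. From Lemma \ref{l:psi} one has bounds of the form $|\psi(x;\tau,w,z)|\le C(1+|x|)\ex^{C(\tau+|w|+|z|)}$ and, for the partial derivatives in $\tau,w,z$ up to the orders needed, $|\partial^\beta\psi(x;\tau,w,z)|\le C(1+|x|)\ex^{C(\tau+|w|+|z|)}$ (the exponential arising, as in Lemma \ref{l:phi}, from the Gronwall estimate for the nonlinear ODE \eqref{e:eqpsi}). Since $f\in C^4_b$, every factor $f^{(k)}(\psi(\cdots))$ is simply bounded by $\|f^{(k)}\|$, and each product of at most four $\psi$-derivatives contributes a factor $(1+|x|)^{\le 4}\ex^{C(\tau+|w|+|z|)}$; evaluating at the shifted point $z+u\xi$ with $|u\xi|\le1$ only changes the exponential by a bounded factor $\ex^{C}$. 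Collecting the worst-case power of $(1+|x|)$ (which is four, coming from terms quartic in first-order $\psi$-derivatives, e.g.\ from $f^{(4)}(\psi)\psi_w^4$-type contributions and their analogues) and the worst exponential gives
\ba
|QQf(\psi(x;\tau,w,z))| \leq C(1+x^4)\,\ex^{C(\tau+|w|+|z|)},
\ea
as claimed. The bookkeeping of which $\psi$-derivative estimates are invoked, and checking that no term requires a fifth derivative of $f$ or a fourth-power-exceeding combination of $\psi$-derivatives in this one-step generator composition, is the only delicate part; it is routine but must be done carefully, and it is precisely the place where the exponential-moment hypothesis \textbf{H$_\nu$} will later be consumed when this lemma is integrated against $\nu$ in the proof of Theorem \ref{t:onestep}.
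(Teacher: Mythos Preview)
Your proposal is correct and follows essentially the same route as the paper: write $QQf(\psi)=\partial_\tau Qf(\psi)+\tfrac12\partial_{ww}Qf(\psi)+\int_{|\xi|\le1}\xi^2\int_0^1\partial_{zz}Qf(\psi(z+\theta\xi))(1-\theta)\,\di\theta\,\nu(\di\xi)$, expand each piece by the chain rule into $f^{(k)}\!\circ\!\psi$ ($k\le 4$) times products of $\psi$-derivatives, and invoke Lemma~\ref{l:psi}. One small correction to your bookkeeping: the bounds in Lemma~\ref{l:psi} are \emph{not} uniformly $C(1+|x|)\ex^{C(\tau+|w|+|z|)}$ for all $\partial^\beta\psi$; a $k$-th order derivative carries a factor $(1+|x|^k)$ (and a polynomial prefactor in $\tau+|w|+|z|$, absorbable into the exponential). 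The reason the total $x$-power never exceeds four is not ``at most four first-order factors'' but rather that in every monomial $\prod_j\partial^{\beta_j}\psi$ appearing one has $\sum_j|\beta_j|\le 4$, and Lemma~\ref{l:psi} respects the scaling $|\partial^\beta\psi|\lesssim(1+|x|^{|\beta|})$---so e.g.\ the single factor $\psi_{zzzz}$ in $f'(\psi)\psi_{zzzz}$ already contributes $(1+x^4)$ on its own.
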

\begin{proof}
Denoting for brevity where it is possible $\psi=\psi(x;\tau,w,z)=\psi(\tau,w,z)$ or adopting
when necessary the notation $\psi(z):=\psi(x;\tau,w,z)$,
we apply the formula \eqref{e:Qf} for a $C^4_b$-function $f$ to get
\ba
Qf(\psi(\tau ,w,z))&=
f'(\psi) \psi_\tau
+\frac12f''(\psi) \cdot \psi_w^2 +\frac12 f'(\psi )\psi_{ww}    \\
&+\int_{|\xi|\leq 1} \Big( f(\psi(z+\xi))-f(\psi(z))-f'(\psi(z))\psi_z(z)\cdot \xi\Big)\, \nu(\di \xi)\\
&=f'(\psi) \psi_\tau
+\frac12f''(\psi) \cdot \psi_w^2 +\frac12 f'(\psi )\psi_{ww}
+\int_{|\xi|\leq 1} \xi^2 \int_0^1 \partial_{zz} f(\psi(z+\xi \theta))(1-\theta)\,\di \theta\, \nu(\di \xi).
\ea
With the help of \eqref{e:Q} we calculate
\ba
\label{e:QQ}
Q^2 f(\psi(\tau,w,z))
&=\prt_\tau Qf(\psi)+\frac12\prt_{ww}^2 Qf(\psi)
+\int_{|\xi|\leq 1} \Big(Qf(\psi(z+\xi))-Qf(\psi(z))-\partial_z Qf(\psi(z))\cdot \xi\Big)\, \nu(\di \xi)\\
&=\prt_\tau Qf(\psi)+\frac12\prt_{ww}^2 Qf(\psi)
+\int_{|\xi|\leq 1}     \xi^2 \int_0^1 \partial_{zz} Qf(\psi( z+\theta\xi))(1-\theta)\,\di \theta   \, \nu(\di \xi).
\ea
We estimate the summands in \eqref{e:QQ}.

\noindent
\textbf{1. $\partial_\tau Q f$.} First,
we write
\ba
\prt_\tau &Qf(\psi(\tau,w,z))=f''(\psi)\psi_\tau^2+ f'(\psi)\psi_{\tau\tau}\\
&+\frac12\Big( f'''(\psi)\cdot \psi_\tau \cdot (\psi_w)^2+ 2f''(\psi)\cdot \psi_w\cdot \psi_{\tau w}
+ f''(\psi)\cdot \psi_\tau\cdot \psi_{ww} +   f'(\psi)\cdot \psi_{\tau ww}  \Big)\\
&+\int_{|\xi|\leq 1} \xi^2 \int_0^1 \partial_{\tau zz} f(\psi(z+\xi \theta))(1-\theta)\,\di \theta\, \nu(\di \xi)
\ea
where for the inegral term we get
\ba
\partial_{\tau zz} f(\psi(\tau,w,z))&=
f'''(\psi) \psi_\tau \psi_z^2
+f''(\psi)  \psi_\tau \psi_{zz}
+2f''(\psi)  \psi_{\tau z} \psi_\tau
+f'(\psi) \psi_{\tau zz}.
\ea

and hence in view of Lemma \ref{l:psi}
\ba
|\prt_\tau Qf(\psi(\tau,w,z))|\leq C  (1+|x|^3)\cdot (1+\tau+|w|+|z|)^2\cdot \ex^{C (\tau+|w|+|z|) }.
\ea

\noindent
\textbf{2. $\partial_{ww} Q f$.}
Analogously
\ba
\partial_{ww}Qf(\psi)&=  f'''(\psi) \psi_\tau \psi_w^2 + 2f''(\psi) \psi_{\tau w} \psi_w +  f''(\psi) \psi_{\tau} \psi_{ww}\\
&+ f'(\psi) \psi_{\tau ww} + \frac12 f^{(4)}(\psi)  \psi^4_w +3 f'''(\psi)  \psi_w^2 \psi_{ww}\\
&+\frac32 f''(\psi)  \psi_{ww}^2 +2 f''(\psi)  \psi_w\psi_{www} +\frac12 f'(\psi) \psi_{wwww}\\
&+\int_{|\xi|\leq 1} \xi^2 \int_0^1 \partial_{ww zz} f(\psi(z+\xi \theta))(1-\theta)\,\di \theta\, \nu(\di \xi),
\ea
where for the integral term we calculate
\ba
\partial_{wwzz} f(\psi)&= f^{(4)}(\psi) \psi_w^2\psi_z^2 \\
&+f'''(\psi) \psi_{ww}\psi_z^2
+4 f'''(\psi) \psi_{w}   \psi_{wz}   \psi_z
+ f'''(\psi) \psi_{w}^2   \psi_{zz} \\
&+ 2f''(\psi) \psi_{w}   \psi_{wzz}
+ 2f''(\psi) \psi_{wz}^2
+ (f'(\psi)+f''(\psi)) \psi_{ww}   \psi_{zz}
+ 2f''(\psi) \psi_{wwz}   \psi_{z},
\ea
which yields
\ba
|\partial_{ww}Qf(\psi(\tau,w,z))|\leq C(1+x^4) \cdot (1+\tau+|w|+|z|)^2\cdot  \ex^{C(\tau+|w|+|z|) }.
\ea

%

\noindent
\textbf{3. $\partial_{zz} Q f$.}
We determine the derivatives
\ba
\partial_{zz}& \Big( f'(\psi) \psi_\tau +\frac12 f''(\psi) \cdot \psi_w^2 +\frac12 f'(\psi )\psi_{ww}     \Big)\\
&=f'(\psi)\psi_\tau\psi_{zz} + f''(\psi)\psi_\tau\psi_{z}^2+ (f'(\psi)+f''(\psi))\psi_{\tau z}\psi_z+f'(\psi)\psi_{\tau zz}\\
&+\frac12 f^{(4)}(\psi)\psi_w^2\psi_z^2 + 2f'''(\psi)\psi_w\psi_z\psi_{wz} + \frac12 f'''(\psi)\psi_w^2\psi_{zz}+f''(\psi)\psi_{wz}^2 +f''(\psi)\psi_w\psi_{wzz}\\
&+\frac12 f'''(\psi)\psi_{z}^2\psi_{ww}+f''(\psi)\psi_z\psi_{wwz}+ \frac12 \psi''(\psi)\psi_{ww}\psi_{zz}
+\frac12f'(\psi)\psi_{wwzz},
\ea
and
\ba
\partial_{zzzz} f(\psi)= f^{(4)}(\psi) \psi_z^4 + 6 f'''(\psi) \psi_z^2\psi_{zz}
+3 f''(\psi) \psi_{zz}^2 + 4  f''(\psi) \psi_{z}\psi_{zzz} + f'(\psi) \psi_{zzzz}
\ea
and apply Lemma \ref{l:psi} to get
\ba
|\partial_{zz} Qf(\psi(x;t,w,z))|\leq C(1+x^4) \cdot (1+\tau+|w|+|z|)^3\cdot  \ex^{C (\tau+|w|+|z|) }.
\ea
%
%
%
\end{proof}

\begin{lem}
\label{l:small}
For any $f\in C^4_b(\bR,\bR)$ there is a constant $C>0$ such that for any $h\geq 0$ and any $x\in\bR$
\ba
|\E_x f(\tilde X_{h})-\E_x f( \psi(x;h,W_h,\tilde Z_h)  )|\leq C(1+x^4)h^2.
\ea
\end{lem}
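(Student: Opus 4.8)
The plan is to compare the two semigroups at time $h$ by expanding each of them to second order in $h$ and showing that the zeroth- and first-order terms coincide, leaving an $O(h^2)$ remainder with the stated polynomial dependence on $x$. Concretely, write $\tilde X_h$ and $Y_h:=\psi(x;h,W_h,\tilde Z_h)$ as solutions of the two martingale problems associated to the generators $\tilde L$ (from \eqref{e:L1}) and $Q$ (from \eqref{e:Q}), respectively. Since $f\in C^4_b$ and, by Lemma \ref{l:psi}, $\psi$ is $C^4$ with derivatives of at most exponential growth in $(\tau,w,z)$, Dynkin's formula applies twice:
\ba
\E_x f(\tilde X_h)&=f(x)+h\,\tilde Lf(x)+\int_0^h\int_0^s \E_x(\tilde L\tilde Lf)(\tilde X_r)\,\di r\,\di s,\\
\E_x f(Y_h)&=f(x)+\int_0^h \E_x Qf(\psi(x;s,W_s,\tilde Z_s))\,\di s\\
&=f(x)+h\,Qf(\psi(x;0,0,0))+\int_0^h\int_0^s \E_x Q^2f(\psi(x;r,W_r,\tilde Z_r))\,\di r\,\di s.
\ea

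The first step is to match the linear terms: by Lemma \ref{l:L=Q} we have $\tilde Lf(x)=Qf(\psi(x;0,0,0))$, so the terms of order $h$ cancel exactly, and
\ba
\big|\E_x f(\tilde X_h)-\E_x f(Y_h)\big|\leq \int_0^h\int_0^s\Big(\E_x|\tilde L\tilde Lf(\tilde X_r)|+\E_x|Q^2f(\psi(x;r,W_r,\tilde Z_r))|\Big)\,\di r\,\di s.
\ea
For the first expectation, Lemma \ref{l:LL} gives $|\tilde L\tilde Lf(y)|\leq C(1+y^4)$, and Lemma \ref{l:boundXtilde} (with $p=4$) bounds $\E_x\sup_{r\leq h}|\tilde X_r|^4\leq K_{h,4}(1+x^4)$, uniformly for $h$ in a bounded interval; hence this contribution is $\leq C(1+x^4)h^2$. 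For the second expectation, Lemma \ref{l:QQ} gives $|Q^2f(\psi(x;\tau,w,z))|\leq C(1+x^4)\ex^{C(\tau+|w|+|z|)}$, so we must bound $\E_x\ex^{C(r+|W_r|+|\tilde Z_r|)}$ for $r\leq h$. Since $W$ is Brownian and $\tilde Z$ has jumps bounded by $1$ (so all exponential moments of $|\tilde Z_r|$ are finite and locally bounded in $r$), this expectation is bounded by a constant depending only on the bounded time horizon; this contribution is likewise $\leq C(1+x^4)h^2$. Combining the two bounds yields the claim.

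The main obstacle is the control of the remainder term for the Euler process $Y$: unlike in the It\^o case, $Q^2f$ evaluated along $\psi$ carries the exponential factor $\ex^{C(\tau+|w|+|z|)}$ coming from the Gronwall bounds on the Marcus flow $\psi$ and its derivatives (Lemma \ref{l:psi}). One must therefore verify that the exponential moment $\E_x\ex^{C(|W_s|+|\tilde Z_s|)}$ is finite and bounded uniformly for $s$ in a bounded interval — which is exactly where the boundedness of the jumps of $\tilde Z$ (rather than a bare moment assumption) enters, dovetailing with the role of Assumption \textbf{H$_\nu$} in the full statement. A secondary technical point is to justify the applicability of Dynkin's formula with the polynomially/exponentially growing generators $\tilde Lf$, $\tilde L\tilde Lf$ and $Q f$, $Q^2f$; this is handled by a standard localization argument using the a priori moment bounds of Lemmas \ref{l:boundXtilde} and (for $Y$) the representation $Y_s=\psi(x;s,W_s,\tilde Z_s)$ together with the growth estimates of Lemma \ref{l:psi}.
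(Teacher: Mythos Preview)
Your proposal is correct and follows essentially the same approach as the paper: a second-order Dynkin expansion for both $\tilde X$ and $Y$, cancellation of the linear terms via Lemma~\ref{l:L=Q}, and control of the two second-order remainders through Lemmas~\ref{l:LL}, \ref{l:boundXtilde} and Lemma~\ref{l:QQ} respectively. Your treatment of the $Q^2$-remainder is in fact slightly cleaner than the paper's---you correctly observe that the factor $(1+x^4)$ from Lemma~\ref{l:QQ} is deterministic, so only the exponential moment $\E\,\ex^{C(r+|W_r|+|\tilde Z_r|)}$ needs to be bounded (finite because $\tilde Z$ has bounded jumps), whereas the paper somewhat confusingly invokes H\"older's inequality with a spurious $|\tilde X_r|^4$ factor.
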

\begin{proof}
 Applying the It\^o formula twice we get
\ba
\E_x f(\tilde X_{h})-\E_xf( \psi(x;h,W_h,\tilde Z_h) )&= \int_0^h \E_x \tilde Lf(\tilde X_s)\,\di s- \int_0^h \E_x Qf(\psi(x;s,W_s,\tilde Z_s))\,\di s\\
&=h \tilde Lf(x) -h Q f(\psi(x;0,0,0))\\
&+   \int_0^h\int_0^s \E_x \tilde L\tilde Lf(\tilde X_r)\,\di r\, \di s- \int_0^h\int_0^s \E QQf(\psi(x;r,W_r,\tilde Z_r))\,\di r\,\di s,
\ea
and hence by Lemma \ref{l:L=Q} and H\"older's inequality for any $p>1$
\ba
\Big|\E_x f(\tilde X_{h})-\E_xf( \psi(x;h,W_h,\tilde Z_h) )\Big|
&\leq h^2 \sup_{r\in [0,h]} \E_x |\tilde L\tilde Lf(\tilde X_r)| + h^2 \sup_{r\in[0,h]}\E |QQf(\psi(x;r,W_r,\tilde Z_r))|\\
&\leq C h^2 \Big(1+ \sup_{r\in [0,h]} \E_x |\tilde X_r|^4\Big) + C h^2  \sup_{r\in [0,h]} \E_x (1+|\tilde X_r|^4)  \ex^{C(r+|W_r|+1)}\\
&\leq C h^2 (1+ |x|^4) + C h^2  \sup_{r\in [0,h]} \Big(\E_x (1+|\tilde X_r|^4)^p\Big)^{1/p}  \Big(\E \ex^{\frac{pC}{p-1}(r+|W_r|+1)}\Big)^{(p-1)/p}\\
&\leq  C h^2 (1+ |x|^4).
\ea
\end{proof}

\subsection{One-step estimate. Proof of Theorem \ref{t:onestep}\label{s:proof-1-step}}

\begin{proof}
Decompose the jump process $Z$ into a sum
\ba
Z_t=\tilde Z_t+ \sum_{k=0}^{N_t} J_k.
\ea
Assume from the very beginning that $\lambda=\nu(|z|>1)>0$.
Denote $\sigma:=\sigma_1$, the first jump time of $t\mapsto \int_0^t \int_{|z|>1}N(\di z,\di s)$, $J=J_1$ the size of the first large jump.
First note, that $\P(\tau\leq t|N_h=1)=t/h$, $t\in[0,h]$, and  $\P(J\in A|N_h=1)=\nu(A\cap\{|z|>1\})/\nu(|z|>1)$.

For each $x\in\bR$
\ba
|\E_x f(X_h)&-\E_x f(\bar X_h)|\\
&\leq |\E_x f(\tilde X_h)-\E_x f( \psi(x;h,W_h,\tilde Z_h) )|
+\E_x\Big[|f(X_h)-f(\bar X_h)|\Big|N_h=1\Big]\P(N_h=1)
+ 2\|f\|\P(N_h\geq 2).
\ea
The first summand is estimated by Lemma \ref{l:small} by $C(1+x^4)h^2$, the third has the order $h^2$.
Let us estimate the second summand.

First note that $\P(N_h=1)\leq Ch$.
Then, on the event $\{N_h=1\}$, the solution $X_h$ can be represented as a composition
\ba
X_h(x)=\tilde X_{\sigma,h}\circ \ophi^J \circ \tilde X_{0,\sigma-}(x)
\ea
and hence
\ba
f(\bar X_h(x))- f(X_h(x))
&= f(\ophi^J(x))-f( \tilde X_{\sigma,h}\circ \ophi^J(x)  )\\
&+  f( \tilde X_{\sigma,h}\circ \ophi^J(x) )  -f( \tilde X_{\sigma,h}\circ \ophi^J \circ \tilde X_{0,\sigma-}(x)  ) \\
&+f(\bar X_h(x))- f(\ophi^J(x)).
\ea

\noindent
\textbf{Step 1.}
Desintegrating the laws of $\sigma$, $J$ and $\tilde Z$ we obtain from the It\^o formula, Lemma \ref{l:L} and Assumption \textbf{H$_{\nu}$}
\ba
&|\E f( \tilde X_{\sigma,h}\circ \ophi^J(x) )  - \E f( \ophi^J(x)  )|\\
&\leq \frac{1}{\lambda h}\int_0^h \int_{|z|>1} |\E f(\tilde X_{h-s}( \ophi^z(x) )) -f(\ophi^z(x) )|\,\nu(\di z)\,\di s\\
&\leq \frac{1}{\lambda h }\int_0^h \int_{|z|>1}  \int_0^{h-s}    \E_{\ophi^z(x)} |\tilde L f(\tilde X_r)|\,\di r\,\nu(\di z)\,\di s\\
&\leq \frac{C}{\lambda h }\int_0^h \int_{|z|>1}  \int_0^{h-s}    \E_{\ophi^z(x)} (1+ |\tilde X_r|^2) \,\di r\,\nu(\di z)\,\di s\\
&\leq  \frac{C_1}{\lambda h}\int_0^h  \int_{|z|>1} h\Big(1+ |\ophi^z(x)|^2 \Big)\,\nu(\di z)\,\di s\\
&\leq C_2 h \cdot \Big( 1+\int_{|z|>1} |\ophi^z(x)|^2\,\nu(\di z)\Big)\\
&\leq C_3 h (1+x^2).
\ea

\noindent
\textbf{Step 2.} Acting similarly we estimate
\ba
|\E f( \tilde X_{\sigma,h}\circ \ophi^J(x) )  &- \E f( \tilde X_{\sigma,h}\circ \ophi^J \circ \tilde X_{\sigma-}(x)  )|\\
&\leq  \E\Big[ \Big|\E f( \tilde X_{\sigma,h}\circ \ophi^J(x) )
- \E f( \tilde X_{\sigma,h}\circ \ophi^J \circ \tilde X_{\sigma-}(x)  )\Big|\Big|  \rF_{\sigma}     \Big]\\
&\leq  \E_x\Big|   \E_{\ophi^J(x)} f( \tilde X_{h-\sigma})
- \E_{ \ophi^J(\tilde X_{\sigma-})  } f( \tilde X_{h-\sigma}  )  \Big|\\
&= \frac{1}{h}\int_0^h  \E_x\Big|   \E_{\ophi^J(x)} f( \tilde X_{h-s})
- \E_{ \ophi^J(\tilde X_{s-})  } f( \tilde X_{h-s}  )  \Big|\,\di s\\
&= \frac{1}{h}\int_0^h \int_{|z|>1} \E_x\Big|   \E_{\ophi^z(x)} f( \tilde X_{h-s})
- \E_{ \ophi^z(\tilde X_{s-})  } f( \tilde X_{h-s}  )  \Big|\,\nu(\di z)\,\di s\\
&= \frac{1}{h}\int_0^h \int_{|z|>1} \E_x\Big|   \tilde f^{h-s}( \ophi^z(x))
-  \tilde f^{h-s}( \ophi^z(\tilde X_{s-}) )   \Big|\,\nu(\di z)\,\di s\\
\ea

\ba
\E f( \tilde X_{\sigma,h}\circ \ophi^J(x) )  &- \E_x f( \tilde X_{\sigma,h}\circ \ophi^J \circ \tilde X_{\sigma-}  )\\
&\leq  \E\Big[\E f( \tilde X_{\sigma,h}\circ \ophi^J(x) )
- \E f( \tilde X_{\sigma,h}\circ \ophi^J \circ \tilde X_{\sigma-}(x)  )\Big|  \rF_{\sigma}     \Big]\\
&\leq  \E_x\Big[   \E_{\ophi^J(x)} f( \tilde X_{h-\sigma})
- \E_{ \ophi^J(\tilde X_{\sigma-})  } f( \tilde X_{h-\sigma}  )  \Big]\\
&= \frac{1}{h}\int_0^h \int_{|z|>1} \E_x\Big[   \E_{\ophi^z(x)} f( \tilde X_{h-s})
- \E_{ \ophi^z(\tilde X_{s-})  } f( \tilde X_{h-s}  )  \Big]\,\nu(\di z)\,\di s\\
\ea

Denote
\ba
\tilde f^{h-s}(x)=\E_x f( \tilde X_{h-s})  .
\ea
Since by Theorem \ref{t:C4}
\ba
\sup_{t\leq T}\Big( \|\tilde f^{t}_x\| +\|\tilde f^{t}_{xx}\| \Big)<C
\ea
we can calculate
\ba
\|\partial_x \tilde f^{t}(\ophi^z(x))\|&\leq C\cdot \|\ophi^z_x\|,\\
\|\partial_{xx} \tilde f^{t}(\ophi^z(x))\|&\leq C\Big(\|\ophi^z_x\|^2+\|\ophi_{xx}^z\|\Big).
\ea

Then for each $s\in[0,h]$ the It\^o formula and Lemma \ref{l:L} imply
\ba
\Big|\E_x \tilde f^{h-s}(\ophi^z(\tilde X_{s-}) ) - \tilde f^{h-s}(\ophi^z(x) ) \Big|
&\leq \int_0^s \E_x |\tilde L\tilde f^{h-s}(\ophi^z(\tilde X_{r}) )|\,\di r\\
&\leq C\cdot h\cdot  \Big(\|\ophi^z_x\|^2+\|\ophi_{xx}^z\|\Big) \cdot   (1+\sup_{r\in[0,h]}\E_x |\tilde X_r|^2)\\
&\leq C\cdot h\cdot  \Big(\|\ophi^z_x\|^2+\|\ophi_{xx}^z\|\Big) \cdot   (1+x^2).
\ea
Hence Assumption \textbf{H$_{\nu}$} yields
\ba
|\E f( \tilde X_{\sigma,h}\circ \ophi^J(x) )  - \E f( \tilde X_{\sigma,h}\circ \ophi^J \circ \tilde X_{\sigma-}(x)  )|
\leq Ch(1+x^2).
\ea


\noindent
\textbf{Step 3.}
Recall that $\bar X_h(x)=\psi(x;h,W_h,J+\tilde Z_h)$.
The Taylor expansion of $\psi=\psi(x;\tau,w,J+\xi )$ for a fixed $x$ at $(0,0,J)$ yields
\ba
\label{e:2nd}
f(\psi(x,h,w,J+\xi))&=f(\psi(x,0,0,J))\\
&+f'(\psi(x,0,0,J))\Big(\psi_\tau(x;0,0,J) h+ \psi_w(x;0,0,J) w + \psi_z(x;0,0,J)\xi  \Big)\\
&+R(x;h,w,J+\xi),
\ea
with the remainder term
\ba
R(x;h,w,J+\xi)
&=\frac12 \int_0^1 f''(\psi(\theta))\Big(\psi_{\tau\tau}(\theta)h^2+2\psi_{\tau w}(\theta)hw+2\psi_{\tau z}(\theta)h\xi+\psi_{ww}(\theta)w^2
+2\psi_{wz}(\theta)w\xi+\psi_{zz}(\theta)\xi^2\Big) \,\di \theta\\
&=R_1+\cdots+R_6.
\ea
where we write $\psi(\theta):=\psi(x;\theta h,\theta w,\theta\xi+J)$.

Due to the independence of $\tilde Z$, $J$ and $W$, $\E W_h=\E \tilde Z_h=0$
we get that the mean value of the second line in \eqref{e:2nd} vanishes.

To estimate the remainder term we have to estimate six terms with the help of \eqref{e:psi2}. Thus
\ba
\E |R_1|&\leq h^2 \|f''\| \int_0^1 \E |\psi_{\tau\tau}(x;\theta h,\theta W_h,\theta\tilde Z_h+J)| \,\di \theta\\
&\leq h^2 \|f''\| C(1+x^2) \E (2+h+| W_h| + |J |)\ex^{5(\|a'\|h+\|b'\||W_h|+\|c'\|(|J|+1))}\\
&\leq C h^2(1+x^2).
\ea

Analogously, the terms $R_2$ and $R_3$ are bounded by $C h(1+x^2)$. Further,
\ba
\E |R_4|\leq \|f''\|\int_0^1 &\E |\psi_{wz}(x;\theta h,\theta W_h,\theta\tilde Z_h+J)|\cdot W_h^2\,\di \theta\\
 &\leq  \|f''\| C(1+x^2) \E \Big[ W_h^2 (2+h+| W_h| + |J |)\ex^{5(\|a'\|h+\|b'\||W_h|+\|c'\|(|J|+1))}\Big]\\
&\leq C h(1+x^2),
\ea
where the factor $h$ essentially comes from the term $W_h^2$. The $R_2$ and $R_3$ are bounded by $C h(1+x^2)$ in a silmilar way.
\end{proof}

\section{Main estimates and the proof of Theorem \ref{t:main}}

According to Markov property of $X$, for each $t\in[0,T]$ and any bounded measurable $f$
\ba
\E_x f(X_T)= \E_x \E_{X_{T-t}} f(X_t)=\E_x f^t(X_{T-t}),
\ea
where
\ba
f^t (x) := \E_x f(X_t).
\ea
Let $0\leq nh\leq T$.
Denote
\ba
u_k(x):= \E_x f^{kh}(\tilde{X}_{T-kh}).
\ea
Then,
\ba
\E_x f(X_T)&=u_n,\\
\E_x f(\bar{X}_T)&=u_0,
\ea
and we have the following chaining representation
\ba
\label{chain}
\E_x f(X_{nh})-\E_x f(\bar{X}_{nh})&=\sum_{k=1}^n (u_k-u_{k-1})\\
&=\sum_{k=1}^n\Big(\E_x f^{kh}(\bar{X}_{nh-kh})-\E_x f^{(k-1)h}(\bar{X}_{nh-kh+h})\Big).
\ea
Observe that
\be
\label{fk-1h}
\E_x f^{(k-1)h}(\bar{X}_{(n-k+1)h}) = \E_x \E_{\bar{X}_{(n-k)h}} f^{(k-1)h}(\bar{X}_h),
\ee
and, using the property
\ba
f^{kh}(y) = \E_y f^{(k-1)h}(X_h),
\ea
we have that
\be
\label{fkh}
\E_x f^{kh}(\bar{X}_{(n-k)h}) = \E_x \E_{\bar{X}_{(n-k)h}} f^{(k-1)h}(X_h).
\ee
Combining \eqref{chain}), \eqref{fk-1h} and \eqref{fkh}, we finally have
\ba
\label{chain_fin}
\E_x f(X_{nh})-\E_x f(\bar{X}_{nh})=\sum_{k=1}^n\E_x \Big(\E_{\bar{X}_{(n-k)h}} f^{(k-1)h}(X_h)-\E_{\bar{X}_{(n-k)h}} f^{(k-1)h}(\bar{X}_h)\Big).
\ea
By Theorem \ref{t:onestep} and the $4$th moment bound \eqref{4thmoment} from Theorem \ref{t:existence},
\ba
\E_x \Big|\E_{\bar{X}_{(n-k)h}} f^{(k-1)h}(X_h)-\E_{\bar{X}_{(n-k)h}} f^{(k-1)h}(\bar{X}_h)\Big|
\leq
C_{1} h^2 (1+ \E_x|\bar{X}_{(n-k)h}|^4)\leq C_{2} h^2.
\ea

\section{$C^4$-smoothness of the Marcus semigroup\label{s:C4}. Proof of Theorem \ref{t:C4}}

We separate the proof in two parts. First, we prove the required statement in the case $\nu(|z|>1)=0$; that is, for $X=\tilde X$.
We consider all the derivatives of $f^t$ till the order $4$:
\begin{align}
\label{e:f1}
\partial_x f^t(x) &= \E_x\Big( f^{\prime}(X_t)\partial_x X_t \Big),\\
\label{e:f2}
\partial_{xx}f^t(x) &= \E_x\left( f^{\prime\prime}(X_t)(\partial_xX_t)^2 \right) + \E_x\left( f^{\prime}(X_t)\partial_{xx}X_t \right),\\
\label{e:f3}
\partial_{xxx}f^t(x) &= \E_x\left( f^{\prime\prime\prime}(X_t)(\partial_xX_t)^3 \right)
+ 3\E_x\left( f^{\prime\prime}(X_t)(\partial_{x}X_t)(\partial_{xx}X_t) \right) + \E_x\left( f^{\prime}(X_t)\partial_{xxx}X_t \right),\\
\label{f4}
\partial_{xxxx}f^t(x) &= \E_x\left( f^{(4)}(X_t)(\partial_xX_t)^4 \right) + 6\E_x\left( f^{\prime\prime\prime}(X_t)(\partial_{x}X_t)^2(\partial_{xx}X_t) \right)\\
&+ 3\E_x\left( f^{\prime\prime}(X_t)(\partial_{xx}X)^2 \right)+ 7\E_x\left( f^{\prime\prime}(X_t)(\partial_{x}X_t)(\partial_{xxx}X_t) \right)\nonumber\\
&+ \E_x\left( f^{\prime}(X_t)\partial_{xxxx}X_t \right).\nonumber
\end{align}

Then the required statement follows from
\begin{prp}
\label{pPT}
Let $\nu(|z|>1)=0$ and \emph{\textbf{H$_{a,b,c}$}} holds. Then for any $p>1$, $T<\infty$
\ba\label{p-bound}
\sup_{t\leq T, x\in \bR}\E |\partial_x^k X_t(x)|^{p}<\infty, \quad k=1, \dots, 4.
\ea
\end{prp}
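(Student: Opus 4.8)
The plan is to differentiate the flow $x\mapsto X_t(x)$ of the SDE \eqref{e:hatX} (with $\nu(|z|>1)=0$, so the only jumps are the small bounded ones driven by $\tilde N$) and obtain linear SDEs for the derivative processes $\partial_x X_t,\dots,\partial_x^4 X_t$, then bound their $L^p$-norms by Gronwall-type arguments. First I would recall that under \textbf{H$_{a,b,c}$} all coefficients of \eqref{e:hatX} — the effective drift $\tilde a$, the diffusion coefficient $b$, and the jump amplitude $z\mapsto \ophi^z(x)-x$ — are $C^4$ in $x$ with bounded derivatives up to order $4$ (this uses Lemma \ref{l:phi} for the $x$-derivatives of $\ophi^z$, together with the integrability $\int_{|z|\le1}|z|^2\,\nu(\di z)<\infty$ which controls the compensated small-jump term). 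Consequently the first variation $V^{(1)}_t:=\partial_x X_t$ solves a \emph{linear} SDE
\ba
V^{(1)}_t = 1 + \int_0^t \tilde a'(\tilde X_s)V^{(1)}_s\,\di s + \int_0^t b'(\tilde X_s)V^{(1)}_s\,\di W_s + \int_0^t\int_{|z|\le1}\big(\ophi^z_x(\tilde X_{s-})-1\big)V^{(1)}_{s-}\,\tilde N(\di s,\di z),
\ea
whose coefficients are bounded in $(s,\omega)$. Applying the It\^o formula to $|V^{(1)}_t|^p$, taking expectations, and using the bound $|\ophi^z_x(x)-1|\le C|z|$ from Lemma \ref{l:phi} to control the jump part (the elementary inequality $|1+u|^p\le 1+pu+C_p u^2$ for $|u|\le C$ lets us absorb the jump contribution into $\int_{|z|\le1}|z|^2\nu(\di z)<\infty$), one gets $\frac{\di}{\di t}\E|V^{(1)}_t|^p\le C\,\E|V^{(1)}_t|^p$, hence $\sup_{t\le T}\E|V^{(1)}_t|^p\le e^{CT}$, uniformly in $x$.

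For the higher derivatives I would proceed inductively. Differentiating the SDE for $V^{(1)}$ once more, the second variation $V^{(2)}_t:=\partial_{xx}X_t$ satisfies a linear SDE with the \emph{same} homogeneous coefficients $\tilde a'(\tilde X_s),b'(\tilde X_s),\ophi^z_x(\tilde X_{s-})-1$ plus an inhomogeneous forcing built from second derivatives of the coefficients times $(V^{(1)}_s)^2$, e.g. terms like $\tilde a''(\tilde X_s)(V^{(1)}_s)^2$ in the drift and $\ophi^z_{xx}(\tilde X_{s-})(V^{(1)}_{s-})^2$ in the jump part; all coefficient factors are bounded (again by Lemma \ref{l:phi}, with $|\ophi^z_{xx}(x)|\le C|z|$ controlling the jump term against $\int_{|z|\le1}|z|^2\nu(\di z)$). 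Since the first-variation bound gives $\sup_{t\le T}\E|V^{(1)}_t|^{q}<\infty$ for every $q$, H\"older's inequality makes the forcing term $L^p$-bounded, and the same It\^o/Gronwall estimate on $|V^{(2)}_t|^p$ closes the step. Iterating, $V^{(k)}_t=\partial_x^k X_t$ for $k=3,4$ solves a linear SDE with bounded homogeneous part and forcing that is a polynomial in $V^{(1)},\dots,V^{(k-1)}$ with bounded coefficients; the already-established $L^q$-bounds on the lower-order variations (for all $q$) together with H\"older control the forcing, and Gronwall finishes. This yields \eqref{p-bound} uniformly in $x$ and $t\le T$.

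The main obstacle is purely a bookkeeping one: verifying that the jump-driven terms in the variation equations are genuinely integrable against $\nu$ on $\{|z|\le1\}$, i.e. that the $x$-derivatives of $\ophi^z$ vanish at the right rate as $z\to0$ — precisely the estimates $|\ophi^z_x(x)-1|,|\ophi^z_{xx}(x)|,|\ophi^z_{xxx}(x)|,|\ophi^z_{xxxx}(x)|$ all being $O(|z|)$ uniformly in $x$, which is exactly what Lemma \ref{l:phi} provides (this is why no exponential-moment hypothesis on $\nu$ is needed here — the big jumps have been removed). A secondary, routine difficulty is that the compensated small-jump integral requires using the Kunita-type $L^p$-estimate for jump integrals (or, equivalently, expanding $|1+u|^p$ to second order) rather than a naive triangle inequality, so that the quadratic-in-$u$ remainder is what gets paired with $\int_{|z|\le1}|z|^2\,\nu(\di z)$; once this is set up, every step is a linear SDE plus Gronwall.
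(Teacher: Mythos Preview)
Your proposal is correct and follows essentially the same route as the paper: write the linear SDEs for $\partial_x^k X_t$, apply the It\^o formula to $|\partial_x^k X_t|^p$, control the jump compensator via the $O(|z|)$ bounds on $\ophi^z_x-1,\ophi^z_{xx},\dots$ from Lemma~\ref{l:phi}, and close inductively by Gronwall using the lower-order $L^q$-bounds. The only cosmetic difference is that the paper uses the Taylor-type inequality $|a+\delta|^p\le |a|^p+p|a|^{p-1}(\sgn a)\delta+C_p(|a|^{p-2}\delta^2+|\delta|^p)$ together with Young's inequality to separate the cross terms, whereas you phrase this via H\"older and the bounded-$u$ version $|1+u|^p\le 1+pu+C_pu^2$; for $k\ge2$ the jump increment contains unbounded forcing like $(V^{(1)})^2$, so in practice you need the former inequality (which is precisely the ``Kunita-type $L^p$-estimate'' you allude to).
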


Proposition \ref{pPT} has the same spirit with \cite[Lemma~4.2]{ProTal-97}. However, the above result is not applicable here
directly, because the It\^o form of the Marcus SDE
\ba\label{e:SDEtrun}
\di  X_t&=\tilde a(X_t)\, \di t
+b(X_t)\, \di W_t
+\int_{|z|\leq 1} \Big(\ophi^z(X_{t-})-X_{t-}\Big)\, \tilde N(\di t, \di z),
\ea
 contains the intergal w.r.t.\ the compensated Poisson random measure,
while \cite{ProTal-97} deal with the It\^o-SDEs w.r.t.\ $\di Z_t$ with a L\'evy process $Z$. Because of that, we outline the proof, mainly in order to make it visible how the non-linear structure of the jump part effects on the assumptions required.

\begin{proof} Without loss of generality we can assume $p\geq 2$, which will allow us to apply the It\^o formula with the $C^2$-function $|x|^p$.

\smallskip
\noindent
\textbf{1. The first derivative.} Denote $X'_t: = \partial_x X_t$, then
\ba
\di  X'_t&=\tilde a'(X_t)X'_t\, \di t +b'(X_t)X'_t\, \di W_t
+\int_{|z|\leq 1} \Big(\ophi^z_x(X_{t-})-1\Big)X'_{t-}\, \tilde N(\di t, \di z),
\ea
and the It\^o formula yields
\ba\label{1-Ito}
|X'_t|^p&= 1+ p\int_0^t |X'_s|^p \tilde a'(X_s)\, \di s
+\frac{p(p-1)}{2}\int_0^t |X'_s|^p b'(X_s)^2\, \di s\\
&+\int_0^t\int_{|z|\leq 1}\Big( |\ophi^z_x(X_s)|^p-1-p( \ophi^z_x(X_s)-1) \Big)|X'_s|^p\,\nu(\di z)\,\di s\\
&+p\int_0^t |X'_s|^p b'(X_s)\, \di W_s+\int_0^t\int_{|z|\leq  1} \Big( |\ophi^z_x(X_{s-})|^p- 1\Big)|X'_{s-}|^p\, \tilde N(\di t, \di z),\\
\ea
where the last two terms are local martingales. Then the standard argument, based on the martingale localization and the Fatou lemma, yields
\ba
\E |X'_t|^p&\leq 1+ p\int_0^t \E |X'_s|^p |\tilde a'(X_s)|\, \di s
+\frac{p(p-1)}{2}\int_0^t \E |X'_s|^p b'(X_s)^2\, \di s\\
&+\int_0^t\int_{|z|\leq 1}\E \Big( |\ophi^z_x(X_s)|^p-1-p( \ophi^z_x(X_s)-1) \Big)|X'_s|^p\,\nu(\di z)\,\di s
\ea
We have the following elementary inequality: for any $p\geq2$ there exists $C_p$ such that for $a, \delta\in \Re$
\ba\label{p-Taylor}
|a+\delta|^p\leq |a|^p+p|a|^{p-1}(\sgn a) \delta+C_p\Big(|a|^{p-2}\delta^2+|\delta|^p\Big).
\ea
In addition, we have $\tilde a', b'$ bounded and, by Lemma \ref{l:phi},
\ba\label{pr1}
|\ophi^z_x(x)-1|\leq C|z|, \quad |z|\leq 1.
\ea
Then, applying \eqref{p-Taylor} with $a=1, \delta=\ophi^z_x(x)-1$ we get from \eqref{1-Ito}
\ba
\E_x|X'_t|^p&\leq 1+ C_{p,T}\int_0^t\E_x |X'_s|^p\, \di s,\quad t\leq T,
\ea
which yields \eqref{p-bound} for $k=1$ by the Gronwall lemma.

\smallskip
\noindent

\textbf{2. The second derivative.} Denote $X''_t: = \partial_{xx} X_t=\partial_x X'_t$, then
\ba
\di  X''_t&=\Big(\tilde a''(X_t)(X'_t)^2 + a'(X_t)X''_t\Big)\, \di t\\
&+\Big(b''(X_t)(X'_t)^2+b'(X_t)X''_t\Big) \, \di W_t\\
&+\int_{|z|\leq 1}\Big[ \ophi^z_{xx}(X_{t-})(X'_{t-})^2 +   \Big(\ophi^z_x(X_{t-})-1\Big)X''_{t-} \Big]   \, \tilde N(\di t, \di z),
\quad X''_0=0,.
\ea By the It\^o formula, localization, and the Fatou lemma,
\ba
\E |X''_t|^p&
\leq p \E \int_0^t \Big(|\tilde a''(X_s)|(X'_s)^2 + |\tilde a'(X_s)||X''_s|  \Big)|X''_s|^{p-1}\, \di s\\
&+\frac{p(p-1)}{2} \E \int_0^t \Big(|b''(X_s)||X_s'|^2 +|b'(X_s)||X''_s|\Big)^2 |X''_s|^{p-2}\, \di s\\
&+\E \int_0^t \int_{|z|\leq 1}
\Big[ \Big|\ophi^z_{xx}(X_{s})(X'_{s})^2 + \ophi^z_x(X_{s}) X''_{s}  \Big|^p -|X''_{s}|^p
\\
&\hspace*{2cm}- p|X''_{s}|^{p-1} \sgn(X''_s) \Big( \ophi^z_{xx}(X_{s})(X'_{s})^2 +   \Big(\ophi^z_x(X_{s})-1\Big)X''_{s} \Big)   \Big]\, \nu(\di z)\,\di s.\\
\ea
We apply \eqref{p-Taylor} with $a=A(X'')=X'', \delta=\delta(X,X',X'',z)= \ophi^z_{xx}(X)(X')^2 +   \Big(\ophi^z_x(X)-1\Big)X''$.
By Lemma \ref{l:phi}, we have for $|z|\leq 1$
\ba\label{pr2}
|\ophi^z_{xx}(x)|\leq C|z|,
\ea
which together with \eqref{pr1} gives
$$
|\delta(X,X',X'',z)|^2\leq C (|X'|^4+|X''|^2)|z|^2,\quad  |\delta(X,X',X'',z)|^p\leq C (|X'|^{2p}+|X''|^p)|z|^p.
$$
Since $\tilde a', \tilde a'', b', b''$ are bounded and $|z|^p\leq |z|^2$ for $|z|\leq 1,$
this yields inequality
\ba\label{2nd0}
\E|X''_t|^p&\leq C\E \int_0^t \Big(|X''_s|^p + |X''_s|^{p-1}|X'_s|^2 + |X''_s|^{p-2}|X'_s|^4+ |X'_s|^{2p}\Big) \,\di s.
\ea
By the Young inequality
$$
ab\leq \frac{a^{p'}}{p'}+\frac{b^{q'}}{q'}, \quad a,b\geq 0, \quad \frac{1}{p'}+\frac{1}{q'}=1,
$$
we have
$$
|X''_s|^{p-1}|X'_s|^2\leq \frac1p |X'_s|^{2p}+\frac{p-1}{p}|X''_s|^p, \quad |X''_s|^{p-2}|X'_s|^4\leq \frac2p |X'_s|^{2p}+\frac{p-2}{p}|X''_s|^p
$$
Then \eqref{p-bound} with $2p$ and $k=1$, \eqref{2nd0}, and the Gronwall inequality yield \eqref{p-bound} with $p$ and $k=2$.

\textbf{3. The third derivative.} Denote $X'''_t: = \partial_{xxx} X_t=\partial_{xx} X'_t= \partial_x X''_t$, then
\ba
\di  X'''_t&= \Big(\tilde a'''(X_t)(X'_t)^3 + 3\tilde a''(X_t)X'_t X''_t    +  \tilde a'(X_t)X'''_t    \Big)\, \di t\\
&+\Big(b'''(X_t)(X'_t)^3 + 3b''(X_t)X'_t X''_t    +  b'(X_t)X'''_t   \Big) \, \di W_t\\
&+\int_{|z|\leq 1}\Big[ \ophi^z_{xxx}(X_{t-})(X'_{t-})^3 + 3 \ophi^z_{xx}(X_{t-})X'_{t-} X''_{t-} + \Big(\ophi^z_x(X_{t-})-1\Big)X'''_{t-}
\Big]   \, \tilde N(\di t, \di z)\\
\ea
By the It\^o formula, localization, and the Fatou lemma,
\ba
\E |X'''_t|^{p}&\leq  p\E \int_{0}^t  \Big(|\tilde a'''(X_s)||X'_s|^3 + 3|\tilde a''(X_s)||X'_s| |X''_s|    +  |\tilde a'(X_s)||X'''_s|    \Big)|X'''_s|^{p-1}\, \di s\,\\
&+\frac{p(p-1)}{2} \E \int_0^t  \Big(|b'''(X_s)||X'_s|^3 + 3|b''(X_s)||X'_s| |X''_s|    +  |b'(X_s)||X'''_s|   \Big)^2 |X'''_s|^{p-2}\, \di s\\
&+ \E \int_0^t\int_{|z|\leq 1}
\Big[ \Big|X'''_{s}+  \ophi^z_{xxx}(X_{s})(X'_{s})^3 + 3 \ophi^z_{xx}(X_s)X'_s X''_s + \Big(\ophi^z_x(X_s)-1\Big)X'''_s \Big|^{p}-|X'''_{s}|^{p}\\
&-p (X'''_{s})^{p-1}\sgn(X'''_{s}) \Big(  \ophi^z_{xxx}(X_{s})(X'_{s})^3 + 3 \ophi^z_{xx}(X_s)X'_s X''_s + \Big(\ophi^z_x(X_s)-1\Big)X'''_s       \Big)
\Big]     \,   \nu(\di z)\,\di s,
\ea
We apply \eqref{p-Taylor} with $a=X''', \delta=\delta(X,X',X'', X''',z)=  \ophi^z_{xxx}(X)(X')^3 + 3 \ophi^z_{xx}(X)X' X'' + \Big(\ophi^z_x(X)-1\Big)$. We have for  $|z|\leq 1$ by Lemma \ref{l:phi}
\ba\label{pr3}
|\ophi^z_{xxx}(x)|\leq C|z|,
\ea
which together with \eqref{pr1}, \eqref{pr2} and the Young inequality  gives
$$
|\delta(X,X',X'',X''',z)|^2\leq C (|X'|^6+|X''|^4+|X'''|^2)|z|^2,\quad  |\delta(X,X',X'',X''', z)|^p\leq C (|X'|^{3p}+|X''|^{2p}+|X'''|^p)|z|^p.
$$
Since the derivatives of $\tilde a, b$ are bounded and  $|X'||X''|\leq C(|X'|^3+|X''|^{3/2})$ by the Young inequality, we get
\ba
\label{3rd0}
\E|X'''_t|^p&\leq C\E \int_0^t \Big(|X'''_s|^p + |X'''_s|^{p-1}|X'_s|^3+ |X'''_s|^{p-1}|X''_s|^{3/2}
\\& + |X'''_s|^{p-2}|X'_s|^6+ |X'''_s|^{p-2}|X''_s|^{3}+ |X'_s|^{3p}+|X''_s|^{3p/2}\Big) \,\di s.
\ea
Then \eqref{p-bound} for $k=3$ with given $p$ follows from the same bounds with $k=1, 3p$ and $k=2, 3p/2$, the Young inequality, and the Gronwall inequality.

\textbf{4. The fourth derivative.} Denote $X''''_t: = \partial_{xxxx} X_t $, then
\ba
\di  X''''_t&= \Big(a'(X_t) X''''_t + 4\tilde a''(X_t) X'_t X'''_t +6 \tilde a'''(X_t)(X'_t)^2 X''_t  + 3 \tilde a''(X_t) (X''_t)^2+\tilde a''''(X_t)(X'_t)^4 \Big)\, \di t\\
&+\Big( 3 b''(X_t) (X''_t)^2+6 (X'_t)^2 X''_t b'''(X_t)+4 X'_tb''(X_t) X'''_t+(X'_t)^4 b''''(X_t)+b'(X_t) X''''_t   \Big) \, \di W_t\\
&+\int_{|z|\leq 1}\Big[3 (X''_{t-})^2 \ophi^z_{xx}(X_{t-})+4 X'_{t-} \ophi^z_{xx}X'''_{t-}+6 (X'_{t-})^2 X''_{t-}  \ophi^z_{xxx}(X_{t-})+(\ophi^z_x(X_{t-})-1) X''''_{t-}
\\&+(X'_{t-})^4 \ophi^z_{xxxx}(X_{t-})
\Big]   \, \tilde N(\di t, \di z).
\ea
By the It\^o formula, localization, and the Fatou lemma,
\ba
\E |X''''_t|^{p}&\leq  p\E  \int_0^t \Big(|a'(X_s)| |X''''_s| + 4|\tilde a''(X_s)| |X'_s| |X'''_s|
\\&\hspace*{1cm}+6 |\tilde a'''(X_s)||X'_s|^2 |X''_s|  + 3 |\tilde a''(X_s)| |X''_s|^2+|\tilde a''''(X_s)|X'_s|^4 \Big)|X''''_s|^{p-1}\, \di s\\
&+\frac{p(p-1)}{2} \E \int_0^t \Big( 3 |b''(X_s)| |X''_s|^2+6 |X'_s|^2 |X''_s| |b'''(X_s)|+4 |X'_s|b''(X_s)| |X'''_s|
\\&\hspace*{1cm}+|X'_s|^4 |b''''(X_s)|+|b'(X_s)| |X''''_s|   \Big)^2 |X''''_t|^{p-2}\, \di s\\
&+ \E \int_{|z|\leq  1}  \Big[ \Big(3 (X''_s)^2 \ophi^z_{xx}(X_s)+4 X'_s \ophi^z_{xx}(X_s)X'''_s
+6 (X'_s)^2 X''_s  \ophi^z_{xxx}(X_s)+\ophi^z_x(X_s) X''''_s\\&+(X'_s)^4 \ophi^z_{xxxx}(X_s)\Big)^2
- (X''''_s)^2-p\Big(3 (X''_s)^2 \ophi^z_{xx}(X_s)+4 X'_s \ophi^z_{xx}X'''_s+6 (X'_s)^2 X''_s  \ophi^z_{xxx}(X_s)
\\&+(\ophi^z_x(X_s)-1) X''''+(X'_s)^4 \ophi^z_{xxxx}(X_s)
\Big) (X''''_s)^{p-1}\sgn(X''''_s) \Big]     \, \nu(\di z) \di s.
\ea
We apply \eqref{p-Taylor} with $a=X'''',$
$$
\delta=\delta(X,X',X'', X''', X'''',z)= 3 (X'')^2 \ophi^z_{xx}(X)+4 X' \ophi^z_{xx}X'''+6 (X')^2 X''  \ophi^z_{xxx}(X)
+(\ophi^z_x(X)-1) X''''+(X')^4 \ophi^z_{xxxx}(X).
$$
We have for  $|z|\leq 1$ by Lemma \ref{l:phi}
\ba\label{pr4}
|\ophi^z_{xxxz}(x)|\leq C|z|,
\ea
which together with \eqref{pr1}, \eqref{pr2}, \eqref{pr3} and the Young inequality  gives
\ba
&|\delta(X,X',X'',X''',z)|^2\leq C (|X'|^8+|X''|^4+|X'''|^{8/3}+|X''''|^2)|z|^2, \\
&|\delta(X,X',X'',X''',z)|^p\leq C (|X'|^{4p}+|X''|^{2p}+|X'''|^{4p/3}+|X''''|^p)|z|^p.
\ea
Since the derivatives of $\tilde a, b$ are bounded, applying the Young inequality once again we get
\ba\label{e:3p}
\E|X'''_t|^p&\leq C\E \int_0^t \Big(|X''''_s|^p + |X''''_s|^{p-1}|X'_s|^4+ |X''''_s|^{p-1}|X''_s|^{2}+ |X''''_s|^{p-1}|X'''_s|^{4/3}
\\& + |X''''_s|^{p-2}|X'_s|^8+ |X''''_s|^{p-2}|X''_s|^{4}+ |X''''_s|^{p-2}|X'''_s|^{8/3}+ |X'_s|^{4p}+|X''_s|^{2p}+|X'''_s|^{4p/3}\Big) \,\di s.
\ea
Then \eqref{p-bound} for $k=4$ with given $p$ follows from the Young inequality, the Gronwall inequality, and the bounds \eqref{p-bound} with $k=1,2,3$ and $p'$ equal $4p, 2p, 4p/3$, respectively.
\end{proof}

Now, let us consider the general case of non-trivial large jump part.
The semigroup $P_t$ of the solution to \eqref{e:SDEM} admits the following representation. Consider the SDE  \eqref{e:SDEtrun}, which corresponds to the driving noise with large jumps (i.e. $|z|>1$) truncated away. Denote the corresponding semigroup $\tilde P_t, t\geq 0$. Denote by $\mathcal Q$ the operator which corresponds to a single large jump of the driving noise:
\ba\label{Q}
\mathcal Q f(x)=\int_{|z|>1}\Big(f(\ophi^z(x))-f(x)\Big)\,\nu(\di z).
\ea
Then we have
\ba
P_t=\ex^{-\lambda t}\tilde P_t+\sum_{k=1}^\infty \ex^{-\lambda t}\int_{0\leq s_1\leq \dots\leq s_k\leq t}
\tilde P_{t-s_k}\mathcal Q\tilde P_{s_k-s_{k-1}}\mathcal Q \dots \mathcal Q P_{s_1}\, \di s_1\dots\di s_k,
\ea
where $\lambda=\nu(|z|>1)$ is the intensity of large jumps. The above representation follows easily by independence of the processes
\ba
\tilde Z_t=\int_0^t\int_{|z|\leq 1} z\, \tilde N(\di s, \di z), \quad \text{and}\quad Z_t-\tilde Z_t=\int_0^t\int_{|z|>1} z\, N(\di s, \di z)
\ea
and the compound Poisson structure of $Z-\tilde Z$.

We have shown in the first part of the proof that
\ba
\|\tilde P_t\|_{C^4\to C^4}\leq C_T, \quad t\leq T.
\ea
On the other hand, for the function $\mathcal Q f$ given by the integral formula \eqref{Q} its derivatives of the orders $1,\dots 4$ admit integral representations similar to \eqref{e:f1}--\eqref{f4}, and then it is a direct calculation to see that
\ba
\|\mathcal Q\|_{C^4\to C^4}\leq C_{\mathcal Q}.
\ea
Then for the semigroup $P_t$ we have for $t\leq T$
\ba
\|P_t\|_{C^4\to C^4}\leq \ex^{-\lambda t}C_T
+\sum_{k=1}^\infty\ex^{-\lambda t}\frac{t^k}{k!}(C_T)^{k+1}(C_{\mathcal Q})^k=C_T\ex^{t(C_{\mathcal Q}C_T-\lambda)}
\leq C_T\ex^{T(C_{\mathcal Q}C_T-\lambda)_+},
\ea
which completes the proof.

\appendix

\section{Properties of $\ophi^z(u;x)$ and its derivatives\label{a:ophi}}

\begin{lem}
\label{l:phi}
Let \emph{\textbf{H}}$_{a,b,c}$ holds true
and let
\ba
\phi(u;x,z)=\ophi^z(u;x)-x-c(x)zu,\quad u\in[0,1].
\ea
Then there is a constant $C>0$ such that for all $|z|\leq 1$ and all $x\in \bR$
\ba
|\phi(u;x,z)|&\leq  C\cdot z^2\cdot|c(x)|,\\
|\nabla^k_x \phi(u;x,z)|&\leq C\cdot z^2,\quad 1\leq k\leq 4.
\ea
In particular, the effective drift $\tilde a\in C^4(\bR,\bR)$ and $\|\nabla^k \tilde a\|<\infty$, $k=1,\dots,4$,
and for $|z|\leq 1$
\ba
&|\ophi^z(u;x)-x|\leq C(1+|x|),\\
&|\ophi^z_{x}(u;x)-1|\leq C|z|,\\
&|\nabla_x^k \ophi^z(u;x)|\leq C|z|,\quad k=2,3,4.
\ea
\end{lem}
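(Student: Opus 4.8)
The plan is to derive everything from the defining ODE $\frac{\di}{\di u}\ophi^z(u;x) = c(\ophi^z(u;x))z$ together with the boundedness assumptions in \textbf{H}$_{a,b,c}$. First I would treat the zeroth-order bound on $\phi$. Since $\phi(u;x,z) = \ophi^z(u;x) - x - c(x)zu$ and $\phi(0;x,z)=0$, $\partial_u\phi(u;x,z)=0$ at $u=0$, I would write $\phi(u;x,z) = \int_0^u\int_0^s \partial_{rr}\ophi^z(r;x)\,\di r\,\di s$. Differentiating the ODE once more in $u$ gives $\partial_{uu}\ophi^z(u;x) = c'(\ophi^z(u;x))\cdot c(\ophi^z(u;x))\, z^2$, so the integrand is a product of $z^2$ with the bounded quantity $c'\cdot c$ evaluated along the flow; hence $|\phi(u;x,z)|\le \frac12 z^2\|c'c\|$. (If one wants the sharper form with $|c(x)|$ on the right, one estimates $c(\ophi^z(r;x))$ by $c(x)$ plus a correction controlled by $\|c'\|\cdot\|c\|\cdot|z|$, which for $|z|\le 1$ is absorbed.) This is the easy part.

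Next I would handle the spatial derivatives $\nabla_x^k\phi$ for $1\le k\le 4$. For the first derivative, set $\Phi(u):=\partial_x\ophi^z(u;x)$; differentiating the ODE in $x$ gives the linear variational equation $\partial_u\Phi(u) = c'(\ophi^z(u;x))z\,\Phi(u)$, $\Phi(0)=1$, whence $\Phi(u) = \exp\!\big(\int_0^u c'(\ophi^z(r;x))z\,\di r\big)$ and in particular $|\partial_x\ophi^z(u;x)|\le \ex^{\|c'\||z|}$ and, for $|z|\le 1$, $|\partial_x\ophi^z(u;x)-1|\le \|c'\||z|\,\ex^{\|c'\|}$, giving the stated $O(|z|)$ bound. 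For $\nabla_x\phi = \partial_x\ophi^z(u;x) - 1 - c'(x)zu$, the same Taylor-in-$u$ trick applies: this quantity and its $u$-derivative vanish at $u=0$, and $\partial_{uu}\partial_x\ophi^z$ is again a product of $z^2$ with quantities that are bounded by \textbf{H}$_{a,b,c}$ (involving $c''$, $c'$, $(c')^2$ composed with the flow, all multiplied by powers of $\partial_x\ophi^z$ that are bounded for $|z|\le 1$). For $k=2,3,4$ one differentiates the variational equation repeatedly: $\nabla_x^k\ophi^z$ solves a linear ODE in $u$ of the form $\partial_u(\nabla_x^k\ophi^z) = c'(\ophi^z)z\,\nabla_x^k\ophi^z + (\text{polynomial in lower-order }\nabla_x^j\ophi^z\text{ with coefficients }c^{(j)}(\ophi^z)z)$, with zero initial condition for $k\ge2$. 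By Duhamel's formula and induction, $\nabla_x^k\ophi^z(u;x)$ is $O(|z|)$ for $|z|\le1$ since the inhomogeneous term carries a factor $z$ and the lower-order terms are already controlled; and $\nabla_x^k\phi = \nabla_x^k\ophi^z$ for $k\ge 2$, so the same bound gives $|\nabla_x^k\phi|\le C z^2$ once one extracts the additional factor $|z|$ from $\nabla_x^k\ophi^z - (\text{linear part})$ via the Taylor-in-$u$ argument with $\partial_{uu}$ producing $z^2$.

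Finally, the statement about the effective drift follows by inserting these bounds into $\tilde a(x) = \mathring a(x) + \int_{|z|\le1}\phi(1;x,z)\,\nu(\di z) + \int_{|z|\le1}(c(\ophi^z(x))-c(x))z\,\nu(\di z)$ — wait, more precisely $\tilde a(x) = a(x) + \frac12 b'b(x) + \int_{|z|\le1}(\ophi^z(x)-x-c(x)z)\,\nu(\di z) = \mathring a(x) + \int_{|z|\le1}\phi(1;x,z)\,\nu(\di z)$, so $\nabla^k\tilde a = \nabla^k\mathring a + \int_{|z|\le1}\nabla_x^k\phi(1;x,z)\,\nu(\di z)$, and the integral converges and is bounded uniformly in $x$ because $|\nabla_x^k\phi|\le C z^2$ and $\int_{|z|\le1}z^2\,\nu(\di z)<\infty$ for any L\'evy measure. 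Differentiating under the integral sign is justified by the same uniform $z^2$-domination. I expect the main technical obstacle to be the bookkeeping in the induction for $\nabla_x^k\ophi^z$, $k=2,3,4$: one must verify that every term appearing after repeated differentiation of the nonlinear ODE is a product of a uniformly bounded composite derivative $c^{(j)}\circ\ophi^z$ (using precisely the hypotheses $\|\partial^\alpha c^i_j\|<\infty$ and $\|c^i_j\partial^\alpha c^k_l\|<\infty$ in \textbf{H}$_{a,b,c}$) times lower-order flow derivatives times an explicit power of $z$, and that the Gronwall/Duhamel constants stay uniform in $x$ and in $|z|\le1$; the exponential factors $\ex^{\|c'\||z|}$ are harmless on $|z|\le1$ but must be carried correctly.
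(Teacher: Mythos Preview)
Your plan is essentially the paper's, with a cosmetic difference: the paper writes down the ODE satisfied by $\phi$ (and by $\phi_x$, $\phi_{xx}$, \dots) and applies Gronwall directly, whereas you represent $\phi$ as the double $u$-integral of $\partial_{uu}\ophi^z$ and bound the integrand. Both routes exploit the same facts --- that $\phi$ and $\partial_u\phi$ vanish at $u=0$, that every $u$-derivative of the flow carries a factor $z$, and that \textbf{H}$_{a,b,c}$ makes the composite quantities $c''c,\,c'''c,\,c''''c$ bounded --- so the arguments are interchangeable.

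Two slips to repair. First, $c'c$ is \emph{not} assumed bounded in \textbf{H}$_{a,b,c}$ (only $c\cdot\partial^\alpha c$ with $|\alpha|\ge 2$ is), and your parenthetical fix ``controlled by $\|c'\|\cdot\|c\|\cdot|z|$'' fails since $\|c\|$ may be infinite. The correct way to produce the factor $|c(x)|$ is to observe that $g(r):=c(\ophi^z(r;x))$ satisfies $g'(r)=c'(\ophi^z)z\,g(r)$, whence $|c(\ophi^z(r;x))|\le |c(x)|\ex^{\|c'\||z|}$; this is exactly what the paper's Gronwall step on $\phi$ encodes. Second, $\nabla_x^k\phi\neq\nabla_x^k\ophi^z$ for $k\ge 2$: one has $\nabla_x^k\phi=\nabla_x^k\ophi^z-c^{(k)}(x)zu$, so the subtracted ``linear part'' you mention afterwards is genuinely needed (and with it your Taylor-in-$u$ argument goes through, since $\partial_u\nabla_x^k\ophi^z\big|_{u=0}=c^{(k)}(x)z$).
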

\begin{proof}

Estimate the integral term.

1. We write
\ba
\ophi^z(u;x)=x+c(x)zu+\phi(u;x,z),\quad u\in[0,1]
\ea
Then
\ba
\frac{\di}{\di u}\ophi^z(u;x)=c(x)z+ \dot \phi(u;x,z)&=c(x+c(x)zu+\phi(u;x,z))z\\
&=c(x)z + c'(\xi)\Big(c(x)zu+\phi(u;x,z)\Big)z,\quad \xi=\xi(u,x,z)
\ea
Hence
\ba
\dot \phi(u;x,z)&= c'(\xi)c(x)z^2u+   \phi(u;x,z)c'(\xi)z,\\
|\phi(u;x,z)|&\leq \int_0^u \Big( \|c'\||c(x)|z^2 + \|c'\||z||\phi(r;x,z)|\Big)\,\di r,\\
|\phi(u;x,z)|&\leq z^2\|c'\||c(x)|\cdot \ex^{\|c'\|}.
\ea
Hence
\ba
|\ophi^z(x)-x-c(x)z |\leq  z^2\|c'\||c(x)|\cdot \ex^{\|c'\|}
\ea
and $\tilde a$ is of linear growth.

2. Analogously,
\ba
\ophi^z_x(u;x)=1+c'(x)zu+\phi_x(u;x,z),\quad u\in[0,1]
\ea
Then
\ba
\frac{\di}{\di u}\ophi^z_x(u;x)=c'(x)z + \dot \phi_x(u;x,z)&=c'(\ophi^z(u;x))\ophi^z_x(u;x) z\\
&=c'(\ophi^z(u;x))\Big(1+c'(x)zu+\phi_x(u;x,z)\Big)z
\ea
Hence
\ba
\dot\phi_x(u;x,z)&=\Big(c'(\ophi^z(u;x))-c'(x) \Big)z + c'(\ophi^z(u;x))c'(x)z^2 u +c'(\ophi^z(u;x))\phi_x(u;x,z)z\\
&=z^2 \int_0^u c''(\ophi^z(r;x))c(\ophi^z(r;x))\,\di r + c'(\ophi^z(u;x))c'(x)z^2 u +c'(\ophi^z(u;x))\phi_x(u;x,z)z
\ea
Hence
\ba
|\phi_x(u;x,z)|&\leq( \|c''c\|+\|c'\|^2)z^2 +  \|c'\||z| \int_0^u|\phi(r;x,z)|\,\di r,\\
|\phi_x(u;x,z)|&\leq z^2 ( \|c''c\|+\|c'\|^2)  \cdot \ex^{\|c'\|}.
\ea

3. Analogously,
\ba
\ophi^z_{xx}(u;x)=c''(x)z u+\phi_{xx}(u;x,z),\quad u\in[0,1]
\ea
Then
\ba
\frac{\di}{\di u}\ophi^z_{xx}(u;x)&=c''(x)z + \dot \phi_{xx}(u;x,z)\\
&=c''(\ophi^z(u;x))\Big(1+ c'(x)zu+\phi^z_x(u;x)\Big)^2 z+c'(\ophi^z(u;x))\Big(c''(x)zu+\phi_{xx}(u;x,z)\Big)z\\
&=c''(\ophi^z(u;x))\Big(1+  2(c'(x)zu+\phi^z_x(u;x)) + (c'(x)zu+\phi^z_x(u;x))^2  \Big) z\\
&+c'(\ophi^z(u;x))\Big(c''(x)zu+\phi_{xx}(u;x,z)\Big)z
\ea
Taking into account that $\|c'''c\|<\infty$ and
\ba
c''(\ophi^z(u;x))-c''(x)= z \int_0^u c'''(\ophi^z(r;x))c(\ophi^z(r;x))\,\di r
\ea
we get that
\ba
|\phi_x(u;x,z)|&\leq z^2 \cdot C_2  \cdot \ex^{\|c'\|}.
\ea
4. The higher derivatives are checked analogously.
\end{proof}

We have the following formulae for the derivatives of the Marcus flow $x\mapsto \ophi^z(x)$.
These derivatives are hence solutions of
non-autonomous non-homogeneous linear differential equations.

\ba
\label{e:eqophi}
&\frac{\di}{\di u}\ophi^z_{x}=z c'(\ophi^z) \ophi^z_{x},\quad \ophi^z(0;x)=1,\\
%
&\frac{\di}{\di u}\ophi^z_{xx}=z c''(\ophi^z) \ophi_x^2+z c'(\ophi^z) \ophi^z_{xx},\\
%
&\frac{\di}{\di u}\ophi^z_{xxx}=z \Big(c'''(\ophi^z) \ophi_x^3+3 c''(\ophi^z)\ophi^z_x \ophi_{xx}^z\Big)+z c'(\ophi^z) \ophi^z_{xxx},\\
%
&\frac{\di}{\di u}\ophi^z_{xxxx}=z \Big(c''''(\ophi^z) (\ophi_x^z)^4+ 6 c'''(\ophi^z)\ophi^z_x (\ophi_{xx}^z)^2
+ 3 c''(\ophi^z)(\ophi_{xx}^z)^2
+ 4c'(\ophi^z)\ophi_{x}^z(\ophi_{xxx}^z)^3 \Big)
+z c'(\ophi^z) \ophi^z_{xxxx},\\
\ea
\begin{lem}
\label{l:Aa}
Under assumption \emph{\textbf{H}}$_{a,b,c}$ we have for all $|z|>1$ and $x\in\bR$
\ba
|\ophi^z_{x}(u;x)|& \leq \ex^{\|c'\||z|},\quad \\
|\ophi^z_{xx}(u;x)| &\leq |z|\ex^{3\|c'\||z|},\quad\\
|\ophi^z_{xxx}(u;x)| &\leq |z|^2\ex^{5\|c'\||z|},\quad\\
|\ophi^z_{xxxx}(u;x)| &\leq |z|^3\ex^{8\|c'\||z|},\quad u\in[0,1].
\ea
In particular,
\ba
|\ophi^z(x)-x|& \leq |x|(1+\ex^{\|c'\||z|}),\\
|\ophi^z_x(x)-1|& \leq  1+\ex^{\|c'\||z|}.
\ea
\end{lem}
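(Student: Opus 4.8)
The plan is to obtain each bound by a Gronwall-type argument applied directly to the ODE system \eqref{e:eqophi}, exploiting the triangular (cascade) structure: the equation for $\ophi^z_x$ is linear and homogeneous, the equation for $\ophi^z_{xx}$ is linear in $\ophi^z_{xx}$ with an inhomogeneity built from $\ophi^z_x$, and so on. Since $|z|>1$ here, we cannot gain a factor $z^2$ as in Lemma \ref{l:phi}; instead every integration against $z\,\di u$ costs a factor $|z|$, and the exponential rates accumulate additively as we climb the cascade.

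First I would treat $\ophi^z_x$. From $\frac{\di}{\di u}\ophi^z_x = z\,c'(\ophi^z)\,\ophi^z_x$ with $\ophi^z_x(0;x)=1$, the solution is $\ophi^z_x(u;x)=\exp\big(z\int_0^u c'(\ophi^z(r;x))\,\di r\big)$, whence $|\ophi^z_x(u;x)|\leq \ex^{\|c'\|\,|z|}$ for $u\in[0,1]$. Next, for $\ophi^z_{xx}$ I would use the variation-of-constants formula (or simply Gronwall) on $\frac{\di}{\di u}\ophi^z_{xx}=z\,c''(\ophi^z)(\ophi^z_x)^2 + z\,c'(\ophi^z)\,\ophi^z_{xx}$ with zero initial datum: the inhomogeneous term is bounded by $\|c''\|\,|z|\,\ex^{2\|c'\||z|}$, and the homogeneous part contributes a factor $\ex^{\|c'\||z|}$, so integrating over $[0,u]\subset[0,1]$ gives $|\ophi^z_{xx}(u;x)|\leq \|c''\|\,|z|\,\ex^{3\|c'\||z|}$; absorbing $\|c''\|$ into the exponential (legitimate since $|z|>1$) yields $|z|\,\ex^{3\|c'\||z|}$. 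The third and fourth derivatives follow the same template: for $\ophi^z_{xxx}$ the worst inhomogeneous term is of order $|z|\cdot|\ophi^z_x||\ophi^z_{xx}|\lesssim |z|^2\ex^{4\|c'\||z|}$, and one more factor $\ex^{\|c'\||z|}$ from the homogeneous part produces $|z|^2\ex^{5\|c'\||z|}$; for $\ophi^z_{xxxx}$ the dominant term $|z|\,|\ophi^z_x||\ophi^z_{xxx}|\lesssim |z|^3\ex^{6\|c'\||z|}$ combined with the factor $\ex^{\|c'\||z|}$ and a further application of the crude bound on the $c''(\ophi^z_{xx})^2$-type terms gives $|z|^3\ex^{8\|c'\||z|}$ after, again, absorbing the constants $\|c^{(k)}\|$ and the polynomial prefactors into the exponent using $|z|>1$. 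The final two inequalities are immediate: $\ophi^z(x)-x=\int_0^1 c(\ophi^z(u;x))z\,\di u$ together with the linear growth $|c(y)|\leq \|c'\||y|$ and the bound $|\ophi^z(u;x)|\leq |x|+\int_0^u\|c'\||z||\ophi^z(r;x)|\,\di r\leq |x|\ex^{\|c'\||z|}$ gives $|\ophi^z(x)-x|\leq |x|(1+\ex^{\|c'\||z|})$ (tuning constants as needed), and $\ophi^z_x(x)-1=\int_0^1 z\,c'(\ophi^z)\,\ophi^z_x\,\di u$ is bounded by $1+\ex^{\|c'\||z|}$ in the same way.

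The only mildly delicate point is bookkeeping of the exponential rates: one must check that the additive accumulation $1\to 3\to 5\to 8$ is exactly what the cascade produces, and that all the scalar constants $\|c'\|,\dots,\|c''''\|$ (finite under \textbf{H}$_{a,b,c}$) together with the polynomial-in-$|z|$ prefactors can be swallowed into the stated exponents — this is where the hypothesis $|z|>1$ is used, since it lets one replace $K|z|^j\ex^{\alpha\|c'\||z|}$ by $\ex^{\beta\|c'\||z|}$ for a suitable $\beta>\alpha$. There is no genuine obstacle here; it is a routine sharpening of the Gronwall estimates already carried out for $|z|\leq 1$ in Lemma \ref{l:phi}.
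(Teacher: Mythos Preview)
Your approach is essentially identical to the paper's: both solve the cascade \eqref{e:eqophi} by the variation-of-constants formula, bounding $\ophi^z_x$ directly from the exponential representation and then feeding each bound into the inhomogeneity of the next linear equation, with the final two inequalities coming from Gronwall applied to $\ophi^z$ itself. The paper simply writes out the explicit integral formulae \eqref{e:grad} and says ``the estimates follow,'' whereas you spell out the bookkeeping of the exponential rates $1\to3\to5\to8$ and note that the constants $\|c^{(k)}\|$ must be absorbed using $|z|>1$; this is exactly the content behind the paper's terse conclusion.
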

\begin{proof}
Indeed, solving the linear equations \eqref{e:eqophi} we get
\ba
\label{e:grad}
\ophi^z_{x}(u)&=  \ex^{\int_0^t c'(\ophi^z)z\,\di r},\\
\ophi^z_{xx}(u)&=  \int_0^u z c''(\ophi^z) \ophi_x^2\cdot \ex^{\int_s^u c'(\ophi^z)z\,\di r}\,\di s,\\
\ophi^z_{xxx}(u)&=  \int_0^u z \Big(c'''(\ophi^z) \ophi_x^3
+3 c''(\ophi^z)\ophi^z_x \ophi_{xx}^z\Big)\ex^{\int_s^u c'(\ophi^z)z\,\di r}\,\di s,
\\
\ophi^z_{xxxx}(u)&=  \int_0^u z \Big(
c''''(\ophi^z) (\ophi_x^z)^4+ 6 c'''(\ophi^z)\ophi^z_x (\ophi_{xx}^z)^2
+ 3 c''(\ophi^z)(\ophi_{xx}^z)^2
+ 4c''(\ophi^z)\ophi_{x}^z\ophi_{xxx}^z
\Big)\ex^{\int_s^u c'(\ophi^z)z\,\di r}\,\di s.\\
\ea
and hence the estimates follow.

By the Gronwall lemma, $|\ophi^z(x)|\leq |x|\ex^{\|c'\|\cdot |z|}$, and
\ba
|\ophi^z(x)-x|\leq 1+\ex^{\|c'\|\cdot |z|}.
\ea
\end{proof}

In the multidimensional setting, solutions should be written in terms of the fundamental solution
of the linear differential equation with the matrix $Dc(\ophi^z(u;x))z$ and the estimates
\eqref{e:grad} follow, for example from \cite[Section IV.4]{Hartman-64}).

\section{Properties of $\psi(u;x;\tau,w,z)$ and its derivatives\label{a:psi}}


For the estimates of the Lemma \ref{l:QQ} we need the following elementary inequalities.

\begin{lem}
\label{l:psi}
Let \emph{\textbf{H}}$_{a,b,c}$ holds true. Then
there is a constant $C>0$ such that for
all $\tau\geq 0$, $w\in\bR$, $z\in\bR$, and $x\in \bR$
\begin{align}
\label{e:psi0}
\sup_{u\in [0,1]}|\psi(u;x;\tau,w,z)|&\leq C(1+|x|)\cdot \ex^{\|a'\|\tau+ \|b'\||w|+ \|c'\||z|},\\
\label{e:psi1}
\sup_{u\in [0,1]}|\partial_i \psi(u;x,\tau,w,z)|&\leq C  (1+|x|)\cdot  \ex^{2(\|a'\|\tau+ \|b'\||w|+ \|c'\||z|)},\quad i\in\{\tau,w,z\} ,\\
\label{e:psi2}
\sup_{u\in [0,1]}|\partial_{ij} \psi(u;x;\tau,w,z)|&\leq C (1+ x^2)\cdot (1+\tau+|w|+|z|)\ex^{5(\|a'\|\tau+ \|b'\||w|+ \|c'\||z|) },
\quad i,j\in\{\tau,w,z\} ,\\
\label{e:psi3}
\sup_{u\in [0,1]}|\psi_{ijk}(u;x;\tau,w,z)|&\leq C  (1+|x|^3)\cdot (1+\tau+|z|+|w|)^2 \ex^{8  (\|a'\|\tau+ \|b'\||w|+ \|c'\||z|)  } ,
\quad i,j,k\in\{\tau,w,z\} ,\\
\label{e:psi4}
\sup_{u\in [0,1]}|\psi_{ijkl}(u;x;\tau,w,z)|&\leq C  (1+|x|^4)\cdot (1+\tau +|z|+|w|)^3\ex^{11  (\|a'\|\tau+ \|b'\||w|+ \|c'\||z|)  } ,
\quad i,j,k,l\in\{\tau,w,z\} .
\end{align}
\end{lem}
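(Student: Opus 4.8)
The plan is to treat $\psi(u;x;\tau,w,z)$ and each of its $x$-derivatives as the solution of a linear ODE obtained by differentiating the defining equation \eqref{e:eqpsi} with respect to the scaling parameters $\tau,w,z$, and to run Gronwall estimates level by level, exactly in the spirit of Appendix \ref{a:ophi}. Write $V(u) := a(\psi(u))\tau + b(\psi(u))w + c(\psi(u))z$ for the right-hand side of \eqref{e:eqpsi}, and note $|V(u)| \le \big(\|a\| \tau + \|b\||w| + \|c\||z|\big)$ only after we control $|\psi|$; more useful is that $|a(\psi)| \le C(1+|\psi|)$ etc., so $|\dot\psi(u)| \le C(1+|\psi(u)|)(\tau+|w|+|z|)$ is not quite what we want either — instead we use $|\partial_x a(\psi)|\le\|a'\|$ and factor out. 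Concretely, for $\psi$ itself, $\frac{\di}{\di u}|\psi| \le |a(\psi)|\tau + |b(\psi)||w| + |c(\psi)||z| \le C(1+|\psi|)(\tau+|w|+|z|)$ is too lossy in the exponent; so instead I would write $a(\psi)=a(x)+\int\!a'$ etc.\ to get $|\dot\psi| \le C(\tau+|w|+|z|) + (\|a'\|\tau+\|b'\||w|+\|c'\||z|)|\psi-x|$ and conclude \eqref{e:psi0} by Gronwall. Here the constant in the exponent is exactly $\|a'\|\tau+\|b'\||w|+\|c'\||z|$, which is the structure we want to propagate.

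Next, differentiating \eqref{e:eqpsi} in $x$ gives $\frac{\di}{\di u}\psi_x = \big(a'(\psi)\tau + b'(\psi)w + c'(\psi)z\big)\psi_x$ with $\psi_x(0)=1$, hence $\psi_x(u) = \exp\!\big(\int_0^u (a'(\psi)\tau+b'(\psi)w+c'(\psi)z)\,\di r\big)$ and $|\psi_x(u)| \le \ex^{\|a'\|\tau+\|b'\||w|+\|c'\||z|}$. For the second $x$-derivative, $\frac{\di}{\di u}\psi_{xx} = (a'(\psi)\tau+\cdots)\psi_{xx} + (a''(\psi)\tau+b''(\psi)w+c''(\psi)z)\psi_x^2$, a linear inhomogeneous equation whose variation-of-constants solution, combined with the bound on $\psi_x$, yields $|\psi_{xx}(u)| \le C(\tau+|w|+|z|)\ex^{3(\|a'\|\tau+\|b'\||w|+\|c'\||z|)}$; the third and fourth $x$-derivatives follow the same recursive pattern, each picking up one more power of $(\tau+|w|+|z|)$ and a larger multiple of the exponent (namely $5$ and $8$), just as in Lemma \ref{l:Aa}. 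This handles all the $x$-derivatives; then the parameter-derivatives $\partial_\tau\psi,\partial_w\psi,\partial_z\psi$ solve $\frac{\di}{\di u}\partial_\tau\psi = (a'(\psi)\tau+\cdots)\partial_\tau\psi + a(\psi)$ (and analogously with $b,c$ for $w,z$), again linear inhomogeneous with inhomogeneity of linear growth in $\psi$; Gronwall together with \eqref{e:psi0} gives \eqref{e:psi1}. The key bookkeeping point is that every further differentiation — whether in $x$ or in a parameter — produces an equation of the form $\frac{\di}{\di u}D = (a'(\psi)\tau+b'(\psi)w+c'(\psi)z)D + (\text{polynomial in lower derivatives, with coefficients }\partial^\alpha a,\partial^\alpha b,\partial^\alpha c)$, so the homogeneous part always contributes the same exponential factor and the inhomogeneity is controlled by the already-established lower-order bounds together with Young's inequality to absorb products.

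I would organize the write-up as an induction on the total order of differentiation $k=|\alpha|$ where $\alpha$ ranges over multiindices in $(\tau,w,z)$-space (and the $x$-direction), with induction hypothesis the bound ``$|\partial^\alpha\psi(u)| \le C(1+|x|^{k'})(1+\tau+|w|+|z|)^{k'-1}\ex^{c_{k'}(\|a'\|\tau+\|b'\||w|+\|c'\||z|)}$ for all orders $k'<k$'', for suitable increasing constants $c_{k'}$, and then verify the induction step by writing down the linear ODE for $\partial^\alpha\psi$, applying variation of constants, bounding the inhomogeneous term by the induction hypothesis plus the boundedness of $\partial^\beta a,\partial^\beta b,\partial^\beta c$ for $|\beta|\le 4$ from \textbf{H}$_{a,b,c}$, and invoking Gronwall's inequality. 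The main obstacle — really the only nontrivial point — is tracking the constants in the exponent so that they match the stated values ($2,5,8,11$ for orders $1,2,3,4$), since each level of the recursion squares or cubes the factor $\ex^{\|a'\|\tau+\cdots}$ coming from products like $\psi_x^2$ or $\psi_x^3$ or $\psi_x^4$ inside the inhomogeneity; one must check that $1+2=3$, then $1+\max(2\cdot 3,\ldots)$ is bounded by $5$ is wrong, so in fact the exponent growth is $c_1=1$ (homogeneous), $c_2 = 1 + 2c_1 = 3$, $c_3 = 1 + \max(3c_1, c_1+c_2) = 1+\max(3,4)=5$, $c_4 = 1 + \max(4c_1, 2c_2, c_1+c_3)=1+\max(4,6,6)=7$, and after also allowing for the $\tau$-$w$-$z$ mixed derivatives one absorbs the remaining slack into the stated looser constants $8$ and $11$. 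Everything else is routine: linear ODE theory, Gronwall, and Young's inequality to convert mixed products $|X'|^a|X''|^b$ into pure powers.
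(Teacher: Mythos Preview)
Your approach --- differentiate the defining ODE for $\psi$ in the parameters, obtain linear inhomogeneous ODEs, solve by variation of constants, and bound the inhomogeneity using the lower-order estimates --- is exactly what the paper does. The paper's proof simply writes out the ODE for each parameter derivative $\psi_\tau, \psi_{\tau\tau}, \psi_{\tau\tau\tau}, \psi_{\tau\tau\tau\tau}$ (and the mixed ones), solves explicitly, and invokes the previous level.

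There is one genuine confusion, though: you have misread the statement. The lemma concerns only derivatives in the \emph{parameters} $\tau,w,z$ (note $i,j,k,l\in\{\tau,w,z\}$), not in $x$. Your entire second paragraph on $\psi_x,\psi_{xx},\psi_{xxx},\psi_{xxxx}$ is irrelevant here --- those estimates belong to Lemma~\ref{l:Aa} for the Marcus flow $\ophi^z$. This misreading then corrupts your exponent bookkeeping: the sequence $c_1=1,\ c_2=3,\ c_3=5,\ c_4=7$ you compute is for $x$-derivatives. For the parameter derivative $\partial_\tau\psi$, the inhomogeneity is $a(\psi)$, which by \eqref{e:psi0} already carries a factor $\ex^{\|a'\|\tau+\|b'\||w|+\|c'\||z|}$; combined with the homogeneous solution this gives exponent $2$, not $1$. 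Then for $\psi_{\tau\tau}$ the worst inhomogeneous term is $(a''\tau+b''w+c''z)\psi_\tau^2$, contributing exponent $1+2\cdot 2=5$ and the extra polynomial factor $(1+\tau+|w|+|z|)$; iterating, the third order gets $1+(2+5)=8$ from $\psi_\tau\psi_{\tau\tau}$, and the fourth order gets $1+\max(2\cdot 5,\,2+8)=11$ from $\psi_{\tau\tau}^2$ or $\psi_\tau\psi_{\tau\tau\tau}$. These recover the stated constants exactly, so no ``slack'' or hand-waving is needed once you track the right quantities.
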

\begin{proof}
These estimates are obtained directly.

\noindent
\textbf{0. Estimate of $\psi$.}
For $\tau,w,z\in\bR$, denote $\psi(u)=\psi(u;x;\tau,w,z)$ the solution to the Cauchy problem
\ba
&\frac{\di}{\di u} \psi(u)=a(\psi(u))\tau+b(\psi(u))w+c(\psi(u))z, \\
&\psi(0)=x,\quad u\in[0,1].
\ea
Since
\ba
 |a(x)|\leq |a(0)|+\|a'\||x|,\quad  |b(x)|\leq |b(0)|+\|b'\||x|,\quad |c(x)|\leq |c(0)|+\|c'\| |x|,
\ea
the Gronwall inequality yields \eqref{e:psi0} for some $C>0$.

\noindent
\textbf{1. Estimates of $\psi_\tau$, $\psi_w$, $\psi_z$.}
The derivative w.r.t.\ $\tau$ satisfies the lienar non-autonomous ODE
\ba
\frac{\di}{\di u}&\psi_\tau=a(\psi)+(a'(\psi)\tau +b'(\psi)w+c'(\psi)z) \psi_\tau\\
&\psi_\tau(0;x;\tau,w,z)=0\\
\ea
which can be solved explicitly
\ba
\psi_\tau(u)&=\int_0^u a(\psi(s))\ex^{\int_{s}^u  (\tau a'(\psi(r))+w b'(\psi(r))+z c'(\psi(r)))\,\di r  }\, \di s,
\ea
Applying the estimate \eqref{e:psi0} we get (for a different constant $C>0$)
\ba
\sup_{u\in [0,1]}|\psi_\tau(u;x;\tau,w,z)|&\leq  C(1+|x|)\cdot \ex^{2(\|a'\||\tau|+\|b'\||w|+\|c'\||z|)} .
\ea
Due to the symmetry of the ODE for $\psi$ w.r.t.\ $\tau$, $w$, and $z$ the same estimate holds for $\psi_w$ and $\psi_z$.

\noindent
\textbf{2. Estimates of  $\psi_{\tau\tau}$, $\psi_{\tau w}$, $\psi_{\tau z}$, $\psi_{ww}$, $\psi_{wz}$, $\psi_{zz}$.}
We consider derivatives $\psi_{\tau\tau}$ and $\psi_{\tau w}$,
\ba
\frac{\di}{\di u}&\psi_{\tau\tau}
=2a'(\psi) \psi_\tau  + \Big(a''(\psi)\tau  +b''(\psi)w +c''(\psi)z\Big)\psi_\tau^2
+ \Big(a'(\psi)\tau  + b'(\psi)w + c'(\psi)z\Big) \psi_{\tau\tau},\\
&\psi_{\tau\tau}(0;x;t,w,z)=0,\\
\frac{\di}{\di u}&\psi_{\tau w}=a'(\psi)\psi_w+ b'(\psi)\psi_\tau+
\Big(a''(\psi)\tau +b''(\psi)w + c''(\psi)z\Big) \psi_\tau\cdot \psi_w
+\Big(a'(\psi)\tau +b'(\psi)w+c'(\psi)z\Big) \psi_{\tau w},
\\
&\psi_{\tau w}(0;x;\tau,w,z)=0.
\ea
Writing down the solution explicitly and using the estimates from the previous steps yields the result.

\noindent
\textbf{3. Estimates of $\psi_{\tau\tau\tau}$, $\psi_{\tau \tau w}$, $\psi_{\tau \tau  z}$, $\psi_{\tau ww}$, \dots}
We consider derivatives $\psi_{\tau\tau\tau}$ and $\psi_{\tau \tau w}$, and $\psi_{\tau w z}$
\ba
\frac{\di}{\di u}&\psi_{\tau\tau\tau}
=3a''(\psi) \psi_\tau^2 + 3a'(\psi) \psi_{\tau\tau} + 3\Big(a''(\psi)\tau  +b''(\psi)w +c''(\psi)z\Big)\psi_\tau\psi_{\tau\tau}\\
&+ \Big(a'''(\psi)\tau     +b'''(\psi)w  +c'''(\psi)z \Big)\psi_\tau^3
%
+ \Big(a'(\psi)\tau  + b'(\psi)w + c'(\psi)z\Big) \psi_{\tau\tau\tau},\\
&\psi_{\tau\tau\tau}(0;x;t,w,z)=0,\\
\ea
\ba
\frac{\di}{\di u}\psi_{\tau\tau w}
&= b''(\psi) \psi_\tau^2 +  b'(\psi)\psi_{\tau\tau}  +  2a'(\psi) \psi_{\tau w}
+ 2\Big(a''(\psi)(1+\tau)  +b''(\psi)w +c''(\psi)z\Big)\psi_\tau\psi_w\\
&+ \Big(a'''(\psi)\tau   +b'''(\psi)w +c'''(\psi)z  \Big)\psi_\tau^2\psi_w
+ \Big(a''(\psi)\tau + b''(\psi)w  + c''(\psi)z \Big) \psi_{\tau\tau}\psi_w \\
&+ \Big(a'(\psi)\tau  + b'(\psi)w + c'(\psi)z\Big) \psi_{\tau\tau w},\\
\ea
\ba
\frac{\di}{\di u}\psi_{\tau w z}&=c''(\psi) \psi_\tau\psi_w+ b''(\psi)\psi_\tau\psi_z +  a''(\psi)\psi_w\psi_z
+ c'(\psi) \psi_{\tau w}+ b'(\psi)\psi_{\tau z}+ a'(\psi)\psi_{wz}\\
&+\Big(a''(\psi)\tau +b''(\psi)w + c''(\psi)z\Big) \Big(\psi_\tau \psi_{wz} + \psi_{\tau w}\psi_z +\psi_{\tau z} \psi_w  \Big) \\
&+\Big(a'''(\psi)\tau  +b'''(\psi)w  + c'''(\psi)z\Big) \psi_\tau \psi_w\psi_z
+\Big(a'(\psi)\tau +b'(\psi)w+c'(\psi)z\Big) \psi_{\tau w z}\\
&\psi_{\tau wz}(0;x;\tau,w,z)=0.
\ea

\noindent
\textbf{4. Estimates of $\psi_{\tau\tau\tau\tau}$, $\psi_{\tau \tau \tau w}$, $\psi_{\tau \tau \tau z}$, $\psi_{\tau \tau ww}$, \dots}

We consider derivatives $\psi_{\tau\tau\tau\tau}$ and $\psi_{\tau \tau \tau w}$, and $\psi_{\tau \tau w w}$, and $\psi_{\tau \tau w z}$:
\ba
\frac{\di}{\di u}\psi_{\tau \tau\tau\tau}&=
\Big(\tau  a^{(4)}(\psi) +w b^{(4)}(\psi) +z c^{(4)}(\psi)\Big) \psi_\tau^4
+6 \Big(\tau  a'''(\psi) +w b'''(\psi)+ z c'''(\psi)\Big)   \psi_\tau^2 \psi_{\tau\tau}\\
&+\Big(\tau  a''(\psi)+ w b''(\psi)+ z c''(\psi)\Big)\Big(3\psi_{\tau\tau}^2+4 \psi_\tau\psi_{\tau\tau\tau}\Big)\\
&+4 \Big(a'''(\psi)\psi_\tau^3+3 a''(\psi)\psi_\tau\psi_{\tau\tau}+a'(\psi)\psi_{\tau\tau\tau}\Big)
+\Big(\tau  a'(\psi)+w b'(\psi)+z c'(\psi)\Big) \psi_{\tau \tau\tau\tau}
\ea

\ba
\frac{\di}{\di u}\psi_{\tau \tau\tau w}
&=b'''(\psi)\psi_\tau^3 +b'(\psi)\psi_{\tau\tau\tau} +3 b''(\psi)\psi_\tau\psi_{\tau\tau} \\
&+3 \Big(a'''(\psi)\psi_w\psi_\tau^2+2 a''(\psi)\psi_\tau\psi_{\tau w}+a''(\psi)\psi_w\psi_{\tau\tau}+a'(\psi)\psi_{\tau\tau w}\Big)\\
&+\Big(\tau  a^{(4)}(\psi)+w b^{(4)}(\psi)+z c^{(4)}(\psi)\Big)\psi_\tau^3\psi_w\\
&+3 \Big(\tau  a'''(\psi)+ w b'''(\psi) +z c'''(\psi)\Big)\Big(\psi_\tau^2\psi_{\tau w}+ \psi_w\psi_\tau\psi_{\tau\tau}\Big)\\
&+\Big(\tau  a''(\psi)+ w b''(\psi)+ z c''(\psi)\Big)\Big(3\psi_{\tau w}\psi_{\tau\tau}+3\psi_\tau\psi_{\tau\tau w}+ \psi_w\psi_{\tau\tau\tau}\Big)\\
&+\Big(\tau  a'\psi+w b'\psi+z c'\psi\Big) \psi_{\tau\tau\tau w}
\ea

\ba
\frac{\di}{\di u}\psi_{\tau \tau w w}
&=\tau  a^{(4)}(\psi) \psi_\tau^2\psi_w^2 +a'''(\psi)\psi_\tau \psi_w^2+4 a''(\psi) \psi_w \psi_{\tau w}+2a''(\psi)\psi_\tau \psi_{ww}+2 a'(\psi) \psi_{\tau ww}\\
&+2 b'''(\psi)\psi_\tau^2 \psi_w +4 b''(\psi) \psi_\tau \psi_{\tau w}+2 b''(\psi) \psi_w \psi_{\tau \tau}+2 b'(\psi) \psi_{\tau \tau w} \\
&+  \Big(\tau  a^{(4)}(\psi) +w b^{(4)}(\psi) + z c^{(4)}(\psi)   \Big) \psi_\tau^2 \psi_w^2   \\
&+\Big(\tau  a'''(\psi)+ w b'''(\psi) + z c'''(\psi)\Big)\Big(4 \psi_w \psi_\tau \psi_{\tau w}+\psi_\tau^2\psi_{ww}+ \psi_{\tau\tau}\psi_{w}^2\Big)\\
&+\Big(\tau  a''(\psi) +w b''(\psi) +z c''(\psi)\Big) \Big(2 \psi_{\tau w}^2+2 \psi_\tau \psi_{\tau ww}+2\psi_w \psi_{\tau \tau w}+ \psi_{\tau \tau}\psi_{ww}  \Big)\\
&+\Big(\tau  a'(\psi)+w b'(\psi)+z c'(\psi)\Big) \psi_{\tau\tau ww}
\ea

\ba
\frac{\di}{\di u}\psi_{\tau \tau w w}
&=  2 a'(\psi) \psi_{\tau w z} + b'(\psi) \psi_{\tau\tau z}+c'(\psi) \psi_{\tau \tau w}\\
&+2 a''(\psi) \Big(\psi_{\tau z}\psi_w +\psi_{\tau w}\psi_z +\psi_\tau\psi_{wz} \Big)
+b''(\psi)\Big( 2\psi_\tau \psi_{\tau z}+\psi_{\tau\tau} \psi_z\Big) +c''(\psi) \Big(2\psi_\tau \psi_{\tau w}+ \psi_{\tau\tau}\psi_w\Big)\\
&+2 a'''(\psi) \psi_\tau\psi_w\psi_z  +b'''(\psi)\psi_\tau^2 \psi_z +c'''(\psi)\psi_\tau^2 \psi_w \\
&+\Big(\tau  a^{(4)}(\psi) +w b^{(4)}(\psi) +z c^{(4)}(\psi)\Big) \psi_\tau^2\psi_w \psi_z \\
&+\Big(\tau  a'''(\psi) +w b'''(\psi) +z c'''(\psi)\Big)\Big( \psi_\tau^2\psi_{wz} + 2\psi_w \psi_\tau \psi_{\tau z}+2 \psi_\tau \psi_{\tau w} \psi_z + \psi_{\tau\tau}\psi_w \psi_z  \Big)\\
&+\Big(\tau  a''(\psi) +w b''(\psi)+ z c''(\psi)\Big)
\Big( 2\psi_{\tau w}\psi_{\tau z} + 2\psi_\tau \psi_{\tau w z}+ \psi_{\tau\tau}\psi_{wz}+\psi_{\tau\tau z}\psi_w +\psi_{\tau \tau w} \psi_z \Big) \\
&+\Big(\tau  a'(\psi)+w b'(\psi)+z c'(\psi)\Big) \psi_{\tau\tau wz}.
\ea

\end{proof}

\end{document}